\newenvironment{eqenumerate}
{\begin{enumerate}[ref=\thesection.\theenumi]
		
		\setcounter{enumi}{\value{equation}}}
	{\setcounter{equation}{\value{enumi}}
\end{enumerate}}
\NewDocumentCommand\eqitem{ o }{
	\setcounter{enumi}{\value{equation}}
	\IfValueTF{#1}
	{\item[#1]}
	{\item}
	\setcounter{equation}{\value{enumi}}
}
\newtheorem{theorem}{Theorem}[section]
\newtheorem{corollary}[theorem]{Corollary}
\newtheorem{lemma}[theorem]{Lemma}
\theoremstyle{definition}
\newtheorem{assum}[theorem]{Assumption}
\newtheorem*{Acknow*}{Acknowledgement}
\numberwithin{equation}{section}
\renewcommand{\Re}{\operatorname{Re}}
\renewcommand{\S}{\mathbb{S}}
\DeclareMathOperator{\supp}{supp}
\newcommand{\R}{\mathbb{R}}
\newcommand{\N}{\mathbb{N}}
\newcommand{\C}{\mathbb{C}}
\newcommand{\Z}{\mathbb{Z}}
\newcommand{\w}{\mathit{w}}
\newcommand{\rad}{\textup{rad.}}
\newcommand{\indicator}[1]{\bm{1}_{#1}}
\renewcommand{\abs}[1]{\lvert#1\rvert} 
\newcommand{\Abs}[1]{\mleft \lvert #1 \mright \rvert}
\renewcommand{\norm}[1]{\lVert#1\rVert} 
\newcommand{\set}[2]{\{ \, #1 : #2 \, \} } 
\newcommand{\Set}[2]{ \mleft \{ \, #1 : #2 \, \mright\} } 
\newcommand{\measure}[1]{ \lvert#1\rvert }
\newcommand{\HP}{ \mathcal{H} }
\newcommand{\Pkn}[2]{P_{#1}^{#2}}
\newcommand{\Pk}[1]{\Pkn{#1}{}}
\DeclareMathOperator{\sgn}{ sgn }
\newcommand{\conjugate}{\overline}
\newcommand{\fpm}{ f_{\pm} }
\newcommand{\fk}[1]{f_{#1}}
\newcommand{\fkn}[2]{f_{#1}^{#2}}
\newcommand{\fknpm}[2]{f_{#1, \pm}^{#2}}
\newcommand{\fknp}[2]{f_{#1, +}^{#2}}
\newcommand{\fknm}[2]{f_{#1, -}^{#2}}
\newcommand\restr[2]{{
		\left.\kern-\nulldelimiterspace 
		#1 
		\vphantom{\big|} 
		\right|_{#2} 
}}
\renewcommand{\innerproduct}[2]{\langle #1, #2 \rangle}
\newcommand{\Innerproduct}[2]{\mleft \langle #1, #2 \mright \rangle}
\newcommand*{\variabledot}{\makebox[1ex]{\textbf{$\cdot$}}}
\newcommand{\adjoint}[1]{#1^{*}}
\newcommand{\Quadmatrix}{\Lambda}
\newcommand{\positiveR}{\R_{>0}}
\newcommand{\eigenspace}{W}
\newcommand{\DiracS}{\widetilde{S}}
\newcommand{\Diraclambda}{\widetilde{\lambda}}
\newcommand{\DiracE}{\widetilde{E}}
	\title[Smoothing estimates for Dirac equations with radial data]{Optimal constants of smoothing estimates for Dirac equations with radial data}
	\date{}
	\author{Makoto Ikoma}
	\address[Makoto Ikoma]{Graduate School of Mathematics, Nagoya University, Furocho, Chikusaku, Nagoya, Aichi, 464-8602, Japan}
	\email{ikma.m18005d@gmail.com}
	\author{Soichiro Suzuki}
	\address[Soichiro Suzuki]{Department of Mathematics, Chuo University, 1-13-27, Kasuga, Bunkyo-ku, Tokyo, 112-8551, Japan}
	\email{soichiro.suzuki.m18020a@gmail.com}
	\thanks{The second author was supported by Japan Society for the Promotion of Science (JSPS) KAKENHI Grant Number JP23KJ1939.}
	\subjclass[2020]{33C55, 35B65, 35Q41, 42B10}
	\keywords{Dirac equations; smoothing estimates; optimal constants.}
\begin{document}
				\begin{abstract}
			Kato--Yajima smoothing estimates are one of the fundamental results in study of dispersive equations such as Schr\"odinger equations and Dirac equations.
			For $d$-dimensional Schr\"odinger-type equations ($d \geq 2$), optimal constants of smoothing estimates were obtained by Bez--Saito--Sugimoto (2017) via the so-called Funk--Hecke theorem. 
			Recently Ikoma (2022) considered optimal constants for $d$-dimensional Dirac equations using a similar method, 
			and it was revealed that determining optimal constants for Dirac equations is much harder than the case of Schr\"odinger-type equations. 
			Indeed, Ikoma obtained the optimal constant in the case $d = 2$, but only upper bounds (which seem not optimal) were given in other dimensions.
			In this paper, we give optimal constants for $d$-dimensional Schr\"odinger-type and Dirac equations with radial initial data for any $d \geq 2$.
			In addition, we also give optimal constants for the one-dimensional Schr\"odinger-type and Dirac equations.
		\end{abstract}
		
		\maketitle

		\section{Introduction}
		Kato--Yajima smoothing estimates are one of the fundamental results in study of dispersive equations such as Schr\"odinger equations and Dirac equations, which were firstly observed by Kato \cite{Kato2} and have been studied by numerous researchers. For example, see \cite{BD1,BD2,BK,BS,BSS,Chihara,Con_Saut,Hos1,Hos2,KPV,KPV2,KPV3,KPV4,KPV5,KPV6,KY,Lin_Pon,Ruz_sug,Sjo,Simon,Sugimoto1,Sugimoto2,Vega,Vilela,Walther1,Watanabe} for Schr\"{o}dinger equations and \cite{BU,DF,Ikoma,KOY,Kato1} for Dirac equations. 
		In particular, Bez--Saito--Sugimoto \cite{BSS},  Bez--Sugimoto \cite{BS}, Ikoma \cite{Ikoma}, Simon \cite{Simon} and Watanabe \cite{Watanabe} considered optimal constants and extremisers of smoothing estimates (see \cite[Introduction]{Ikoma} for overviews of these results). 
		Our aim in this paper is to give optimal constants and extremisers of smoothing estimates for Dirac equations with radial initial data.
		
		In 2015, Bez--Saito--Sugimoto \cite{BSS} pointed out that the so-called Funk--Hecke theorem (see Theorem \ref{thm:Funk-Hecke}) is quite useful to obtain optimal constants of smoothing estimates and to determine extremisers of the inequalities for the following equation:
		\begin{equation} \label{eq:Schrodinger}
			\begin{cases}
				i \partial_t u(x, t) = \phi(\abs{D}) u(x, t) , & (x, t) \in \R^d \times \R , \\
				u(x, 0) = f(x) , & x \in \R^d , 
			\end{cases}
		\end{equation}
		where $\phi(\abs{D})$ denotes the Fourier multiplier operator whose symbol is $\phi(\abs{\variabledot})$, that is, 
		\begin{equation}
			\mathcal{F} \phi(\abs{D}) f = \phi(\abs{\xi}) \widehat{f}(\xi) .
		\end{equation}
		Particularly, \eqref{eq:Schrodinger} becomes the free Schr\"odinger equation if $\phi(r) = r^2$ and the relativistic Schr\"odinger equation if $\phi(r) = \sqrt{r^2 + m^2}$, respectively.
		For convenience, we refer the equation \eqref{eq:Schrodinger} as a Schrodinger-type equation.
		The smoothing estimate of the Schrodinger-type equation is expressed as
		\begin{equation}
			\int_{\R}\int_{\R^d}\abs{\psi(\abs{D}) e^{it\phi(\abs{D})}f(x)}^2 \w(\abs{x}) \, dx \, dt \leq C\norm{f}^2_{L^2(\R^d)} , \label{eq:smoothing Schrodinger}
		\end{equation}
		here functions $\w$ and $\psi$ are called spatial weight and dispersion relation, respectively.
		We write $C_{d}(\w,\psi,\phi)$ for the optimal constant for the inequality \eqref{eq:smoothing Schrodinger}, in other words,
		\begin{equation}
			C_{d}(\w,\psi,\phi) \coloneqq \sup_{\substack{f\in L^2(\R^d) \\ 
					f\not\equiv0}}\int_{\R}\int_{\R^d}\abs{\psi(\abs{D})e^{it\phi(\abs{D})}f(x)}^2\w(\abs{x}) \, dx \, dt / \norm{f}^2_{L^2(\R^d)} . \label{eq:optimal Schrodinger}
		\end{equation}
		Under some reasonable assumptions for $\w$, $\psi$ and $\phi$ (see Assumption \ref{assumption} for details), \cite{BSS} established the following result:
		\begin{theorem}[{\cite[Theorem 1.1]{BSS}}] \label{thm:Schrodinger}
			Let $d\geq2$. 
			Then we have 
			\begin{equation}
				C_{d}(\w,\psi,\phi) = \frac{1}{(2\pi)^{d-1}}\sup_{k\in\N_{0}}\sup_{r>0}\lambda_{k}(r), \label{optimal_answer_1}
			\end{equation}
			where
			\begin{equation}
				\lambda_{k}(r)\coloneqq \abs{\S^{d-2}}\frac{r^{d-1}\psi(r)^2}{\abs{\phi^{\prime}(r)}}\int_{-1}^{1}F_{\w}(r^2(1-t))p_{d,k}(t)(1-t^2)^{\frac{d-3}{2}} \, dt. \label{eq:lambda_k}
			\end{equation}
			Here $F_{\w}$ is defined by the relation
			\begin{equation}
				\widehat{\w(\abs{\,\cdot\, })}(\xi) = F_{\w}(\tfrac{1}{2}\abs{\xi}^2)  \label{w_Fourier}
			\end{equation}
			and $p_{d,k}$ is the Legendre polynomial of degree $k$ in $d$ dimensions, which may be defined in a number of ways, for example, via the Rodrigues formula,
			\begin{equation}
				(1-t^2)^{\frac{d-3}{2}}p_{d,k}(t) = (-1)^{k}\frac{\Gamma(\frac{d-1}{2})}{2^k\Gamma(k+\frac{d-1}{2})}
				\frac{d^k}{dt^k}(1-t^2)^{k+\frac{d-3}{2}}. \label{Legendre}
			\end{equation}
		\end{theorem}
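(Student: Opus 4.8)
The plan is to convert the space--time estimate \eqref{eq:smoothing Schrodinger} into a spectral problem for a one-parameter family of operators on the unit sphere, and then to diagonalise those operators by the Funk--Hecke theorem. First I would rewrite the left-hand side of \eqref{eq:smoothing Schrodinger} in terms of $g\coloneqq\widehat f$: expanding $\abs{\psi(\abs{D})e^{it\phi(\abs{D})}f(x)}^2$ and integrating in $x$ against $\w(\abs{x})$ produces the factor $\widehat{\w(\abs{\,\cdot\,})}(\xi-\eta)=F_{\w}(\tfrac12\abs{\xi-\eta}^2)$ by \eqref{w_Fourier}, after which integrating in $t\in\R$ produces $2\pi\,\delta(\phi(\abs{\xi})-\phi(\abs{\eta}))$. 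Passing to polar coordinates $\xi=r\omega$, $\eta=s\theta$ (with $r,s>0$ and $\omega,\theta\in\S^{d-1}$) and using that $\phi$ is strictly monotone, so that $\phi(r)=\phi(s)$ iff $r=s$ with Jacobian $\abs{\phi'(r)}^{-1}$, collapses the $s$-integration onto $s=r$; since $\tfrac12 r^2\abs{\omega-\theta}^2=r^2(1-\omega\cdot\theta)$, the surviving spherical double integral is $\innerproduct{T_r g_r}{g_r}$ in $L^2(\S^{d-1})$, where $g_r(\omega)\coloneqq g(r\omega)$ and $T_r$ is the integral operator on $L^2(\S^{d-1})$ with kernel $(\omega,\theta)\mapsto F_{\w}(r^2(1-\omega\cdot\theta))$. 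Keeping track of the constants (a factor $2\pi$ from the $t$-integration, $(2\pi)^{-2d}$ from the two inverse Fourier transforms, and Plancherel in the form $\norm{f}_{L^2(\R^d)}^2=(2\pi)^{-d}\int_0^\infty r^{d-1}\norm{g_r}_{L^2(\S^{d-1})}^2\,dr$), this identifies, for Schwartz $f$ whose Fourier transform is supported away from the origin, the left-hand side of \eqref{eq:smoothing Schrodinger} with $(2\pi)^{-(d-1)}\int_0^\infty r^{d-1}A(r)\innerproduct{T_r g_r}{g_r}\,dr$, where $A(r)\coloneqq r^{d-1}\psi(r)^2/\abs{\phi'(r)}$. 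Making the distributional manipulation and the passage from Schwartz $f$ to arbitrary $f\in L^2(\R^d)$ rigorous (by approximation, under Assumption \ref{assumption}) is where the most care is needed, and I expect this to be the main obstacle, although it is by now classical Kato-type bookkeeping.

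From $\innerproduct{T_r g_r}{g_r}\le\norm{T_r}_{L^2(\S^{d-1})\to L^2(\S^{d-1})}\norm{g_r}^2$ and the Plancherel identity for $\norm{f}_{L^2(\R^d)}^2$, one gets $C_d(\w,\psi,\phi)\le(2\pi)^{-(d-1)}\sup_{r>0}A(r)\norm{T_r}$. For the reverse inequality I would test with $f$ whose Fourier transform is concentrated on a thin shell $\set{\xi\in\R^d}{\abs{\xi}\in(r_0-\varepsilon,r_0+\varepsilon)}$ and, on each sphere of radius $r$, proportional to a near-maximiser of the quadratic form $\innerproduct{T_r\,\cdot\,}{\,\cdot\,}$; letting $\varepsilon\to0$ and using continuity of $r\mapsto A(r)$ and of the relevant eigenvalues of $T_r$ recovers $(2\pi)^{-(d-1)}A(r_0)\norm{T_{r_0}}$ for $r_0$ arbitrarily close to realising the supremum, whence $C_d(\w,\psi,\phi)=(2\pi)^{-(d-1)}\sup_{r>0}A(r)\norm{T_r}$.

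It remains to compute $\norm{T_r}$. Because the kernel of $T_r$ depends on $\omega,\theta$ only through $\omega\cdot\theta$, the Funk--Hecke theorem (Theorem \ref{thm:Funk-Hecke}) applies and shows that $T_r$ acts on each spherical harmonic of degree $k$ as multiplication by $\mu_k(r)\coloneqq\abs{\S^{d-2}}\int_{-1}^1 F_{\w}(r^2(1-t))\,p_{d,k}(t)(1-t^2)^{\frac{d-3}{2}}\,dt$; since the spherical harmonics form an orthonormal basis of $L^2(\S^{d-1})$ (and Assumption \ref{assumption} makes $T_r$ Hilbert--Schmidt, so nothing lies outside this family), $T_r$ is diagonal with spectrum $\set{\mu_k(r)}{k\in\N_{0}}$. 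Moreover $\w\ge0$ forces $\widehat{\w(\abs{\,\cdot\,})}$ to be positive definite on $\R^d$ by Bochner's theorem, so $(\omega,\theta)\mapsto F_{\w}(r^2(1-\omega\cdot\theta))=\widehat{\w(\abs{\,\cdot\,})}(r\omega-r\theta)$ is a positive semidefinite kernel; hence $T_r\ge0$ and $\mu_k(r)\ge0$ for every $k$, so that $\norm{T_r}=\sup_{k\in\N_{0}}\mu_k(r)$. Since $\lambda_k(r)=A(r)\mu_k(r)$ by \eqref{eq:lambda_k}, interchanging the two suprema gives
\begin{equation*}
C_d(\w,\psi,\phi)=\frac{1}{(2\pi)^{d-1}}\sup_{r>0}A(r)\sup_{k\in\N_{0}}\mu_k(r)=\frac{1}{(2\pi)^{d-1}}\sup_{k\in\N_{0}}\sup_{r>0}\lambda_k(r),
\end{equation*}
which is \eqref{optimal_answer_1}.
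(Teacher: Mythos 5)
Your proposal is correct and follows essentially the same route as the paper (which itself defers the proof to \cite{BSS} and encapsulates the key computation in Lemma~\ref{lem:Sf norm decomposition Schrodinger}): pass to the Fourier side, use the time integral to produce the $\delta(\phi(\abs{\xi})-\phi(\abs{\eta}))$ foliation, reduce to a one-parameter family of spherical integral operators $T_r$ with kernel $F_\w(r^2(1-\omega\cdot\theta))$, diagonalise via Funk--Hecke to get the eigenvalues $\mu_k(r)$ on degree-$k$ spherical harmonics (equivalently $\lambda_k(r)=A(r)\mu_k(r)$), and then take suprema, with the lower bound obtained by concentrating $\widehat f$ on a thin shell times a near-maximising spherical harmonic. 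Two small remarks. First, the displayed ``identification'' of the left-hand side with $(2\pi)^{-(d-1)}\int_0^\infty r^{d-1}A(r)\innerproduct{T_r g_r}{g_r}\,dr$ is off by a factor of $(2\pi)^{-d}$: tracking your own accounting ($2\pi$ from $t$, $(2\pi)^{-2d}$ from the two inverse transforms) the left-hand side equals $(2\pi)^{1-2d}\int_0^\infty r^{d-1}A(r)\innerproduct{T_r g_r}{g_r}\,dr$, and the displayed $(2\pi)^{-(d-1)}$ only appears after dividing by $\norm{f}^2_{L^2}=(2\pi)^{-d}\int_0^\infty r^{d-1}\norm{g_r}^2\,dr$; your subsequent bound on $C_d$ is stated correctly, so this is a presentational slip rather than a gap. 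Second, invoking Bochner to show $T_r\ge 0$ is sound but unnecessary for the equality: the tight bound on the Rayleigh quotient is $\innerproduct{T_r g_r}{g_r}\le\lambda_{\max}(T_r)\norm{g_r}^2$, and since $\{\mu_k(r)\}_{k\in\N_0}$ exhausts the spectrum of the compact self-adjoint $T_r$, one has $\lambda_{\max}(T_r)=\sup_k\mu_k(r)$ directly, so the interchange $\sup_r A(r)\sup_k\mu_k(r)=\sup_k\sup_r\lambda_k(r)$ already gives \eqref{optimal_answer_1} without positivity. The positivity is of course true and useful elsewhere (it guarantees the supremum in \eqref{optimal_answer_1} is over nonnegative numbers), so including it is fine, just not load-bearing here.
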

		Since we are interested in explicit constants, here we clarify that the Fourier transform in this paper is defined by  
		\begin{equation}
			\hat{f}(\xi) \coloneqq \int_{x \in \R^d}f(x)e^{-ix\cdot\xi} \, dx .
		\end{equation}
		In this case, the Plancherel theorem states that 
		\begin{equation}
			\norm{\hat{f}}_{L^2(\R^d)}^2 = (2 \pi)^d \norm{f}_{L^2(\R^d)}^2 .
		\end{equation}
		
		Now we are going to discuss the free Dirac equation.	
		Let $d \geq 1$ and write $N \coloneqq 2^{\lfloor(d+1)/2\rfloor}$. 
		The $d$-dimensional free Dirac equation with mass $m \geq 0$ is given by 
		\begin{equation} \label{eq:Dirac}
			\begin{cases}
				i\partial_{t} u(x,t) = H_m u(x, t), & (x, t) \in \R^d \times \R , \\
				u(x,0) = f(x) , & x \in \R^d .
			\end{cases}
		\end{equation}
		Here the Dirac operator $H_m$ is defined by
		\begin{equation}
			H_m \coloneqq \alpha\cdot D + m\beta = \sum_{j=1}^{d}\alpha_jD_j + m\beta ,
		\end{equation}
		where $\alpha_1, \alpha_2, \ldots, \alpha_d$, $\alpha_{d+1}=\beta$
		are $N\times N$ Hermitian matrices satisfying the anti-commutation relation $\alpha_j\alpha_k + \alpha_k\alpha_j = 2\delta_{jk}I_N$. 
		In this case, the smoothing estimate is 
		\begin{equation} \label{eq:smoothing Dirac}
			\int_{\R}\int_{\R^d}\abs{\psi(\abs{D})e^{-itH_m}f(x)}^2\w(\abs{x}) \, dx \, dt \leq C\norm{f}^2_{L^2(\R^d,\C^N)}, 
		\end{equation}
		and let $\widetilde{C}_{d}(\w, \psi, m)$ be the optimal constant of the inequality \eqref{eq:smoothing Dirac}:
		\begin{equation}
			\widetilde{C}_{d}(\w,\psi,m)\coloneqq \hspace{-12pt}\sup_{\substack{f\in L^2(\R^d,\C^N) \\ 
					f\not\equiv0}}\int_{\R}\int_{\R^d}\abs{\psi(\abs{D})e^{-itH_m}f(x)}^2\w(\abs{x}) \, dx \, dt /\norm{f}^2_{L^2(\R^d,\C^N)} . \label{eq:optimal Dirac}
		\end{equation}
		Recently, Ikoma \cite{Ikoma} established the following results:
		\begin{theorem}[{\cite[Theorem 2.2]{Ikoma}}]\label{thm:2D Dirac}
			Let $d = 2$. 
			Then we have
			\begin{equation} 
				\widetilde{C}_{2}(\w,\psi,m) = \frac{1}{2\pi}\sup_{ k \in \N }\sup_{r>0}\Diraclambda_{k}(r),
			\end{equation}
			where
			\begin{equation}
				\Diraclambda_{k}(r) 
				\coloneqq \frac{1}{2} \mleft( \lambda_{k}(r) + \lambda_{k+1}(r) + \frac{m}{\sqrt{r^2+m^2}} \abs{ \lambda_{k}(r) - \lambda_{k+1}(r) } \mright)
			\end{equation}
			and $\lambda_{k}(r)$ is \eqref{eq:lambda_k} with $\phi(r)=\sqrt{r^2+m^2}$.
		\end{theorem}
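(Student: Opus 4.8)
The plan is to run the argument behind Theorem~\ref{thm:Schrodinger}: diagonalise the propagator on the Fourier side, perform the $t$–integration so that the estimate localises to a single frequency sphere, and then diagonalise the quadratic form obtained on that sphere; the two new features relative to \eqref{eq:Schrodinger} are the spinor structure of the Dirac flow and the fact that, after the reduction, the admissible data on each sphere is forced into a \emph{moving} subspace of $\C^{N}$ (here $N=2$), and handling this twist is the crux. To begin, $H_{m}$ is the Fourier multiplier with symbol $\alpha\cdot\xi+m\beta$, a Hermitian matrix with eigenvalues $\pm\phi(\abs{\xi})$ where $\phi(r)=\sqrt{r^{2}+m^{2}}$, and with spectral projections $\Pi_{\pm}(\xi)=\tfrac12\bigl(I_{N}\pm(\alpha\cdot\xi+m\beta)/\phi(\abs{\xi})\bigr)$. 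Thus, writing $f_{\pm}:=\Pi_{\pm}(D)f$, one has $e^{-itH_{m}}f=e^{-it\phi(\abs{D})}f_{+}+e^{it\phi(\abs{D})}f_{-}$ and $\norm{f}_{L^{2}(\R^{2},\C^{2})}^{2}=\norm{f_{+}}^{2}+\norm{f_{-}}^{2}$ since $\operatorname{Ran}\Pi_{+}\perp\operatorname{Ran}\Pi_{-}$ pointwise.

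Next, insert this splitting into the left side of \eqref{eq:smoothing Dirac}, apply the Plancherel theorem in $x$ exactly as in \cite{BSS}, and integrate in $t$. The mixed $f_{+}$–$f_{-}$ contributions carry the factor $\int_{\R}e^{\mp it(\phi(\abs{\xi})+\phi(\abs{\eta}))}\,dt=2\pi\delta(\phi(\abs{\xi})+\phi(\abs{\eta}))$, which vanishes because $\phi\geq m\geq0$ (on a Lebesgue-null set when $m=0$), while the two diagonal terms each produce $2\pi\delta(\phi(\abs{\xi})-\phi(\abs{\eta}))$ and hence localise to $\abs{\xi}=\abs{\eta}=\rho$. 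Using \eqref{w_Fourier} in the form $\widehat{\w(\abs{\,\cdot\,})}(\rho(\theta-\omega))=F_{\w}(\rho^{2}(1-\omega\cdot\theta))$ and writing $h_{\pm}(\omega):=\widehat{f_{\pm}}(\rho\omega)$, one obtains — verbatim as in the scalar case, only with the $\C^{2}$–inner product replacing products of scalars — that the left side of \eqref{eq:smoothing Dirac} is a $\rho$–superposition of the forms $\innerproduct{T_{\rho}h_{+}}{h_{+}}+\innerproduct{T_{\rho}h_{-}}{h_{-}}$ on $L^{2}(\S^{1};\C^{2})$, where $T_{\rho}$ is the self-adjoint integral operator with scalar kernel $F_{\w}(\rho^{2}(1-\omega\cdot\theta))$ acting componentwise and each $h_{\pm}$ runs over sections with $h_{\pm}(\omega)\in\operatorname{Ran}\Pi_{\pm}(\rho\omega)$. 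A routine concentration argument in $\rho$ (as in \cite{BSS}) then gives
\[
\widetilde{C}_{2}(\w,\psi,m)=\frac{1}{2\pi}\sup_{\rho>0}\frac{\rho\,\psi(\rho)^{2}}{\abs{\phi'(\rho)}}\,\Lambda(\rho),
\]
where $\Lambda(\rho)$ is the larger of the top eigenvalues of the two constrained operators $\Pi_{\pm}(\rho\,\cdot\,)\,T_{\rho}\,\Pi_{\pm}(\rho\,\cdot\,)$; these are compact and self-adjoint (the kernel is Hilbert–Schmidt under Assumption~\ref{assumption}), so the top eigenvalues are genuinely attained.

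The heart of the matter is the explicit diagonalisation of these constrained operators when $d=2$. Parametrise $\S^{1}$ by $\xi=\rho(\cos\varphi,\sin\varphi)$ and fix a representation of the $\alpha$–matrices. Since $N=2$, each $\Pi_{\pm}(\xi)$ has rank one, and the $\phi(\rho)$–eigenvector of $\alpha\cdot\xi+m\beta$ may be taken to be the \emph{globally defined, periodic} unit section $e_{+}(\varphi)=(p,\,q e^{i\varphi})^{\mathsf T}$ with $p=\sqrt{\tfrac12(1+m/\phi(\rho))}$, $q=\sqrt{\tfrac12(1-m/\phi(\rho))}$, and likewise $e_{-}(\varphi)=(q e^{-i\varphi},\,-p)^{\mathsf T}$ for $-\phi(\rho)$; note $p^{2}+q^{2}=1$ and $p^{2}-q^{2}=m/\phi(\rho)$. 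Writing $h_{+}=c\,e_{+}$ identifies the constraint space isometrically with $L^{2}(\S^{1})$ and turns $\innerproduct{T_{\rho}h_{+}}{h_{+}}$ into the scalar convolution-type form with kernel $F_{\w}(\rho^{2}(1-\cos(\varphi-\varphi')))\bigl(p^{2}+q^{2}e^{i(\varphi-\varphi')}\bigr)$, which is diagonal in the Fourier basis $\{e^{ik\varphi}\}_{k\in\Z}$. The extra factor $e^{i(\varphi-\varphi')}$ merely shifts the Fourier index by one, and the Fourier coefficients of the even kernel $F_{\w}(\rho^{2}(1-\cos\,\cdot\,))$ are exactly the Funk–Hecke eigenvalues $\mu_{k}$ appearing in \eqref{eq:lambda_k} (extended by $\mu_{-k}=\mu_{k}$), so the eigenvalue on the mode $e^{ik\varphi}$ is $p^{2}\mu_{k}+q^{2}\mu_{k+1}$; the same computation with $e_{-}$ produces $p^{2}\mu_{k}+q^{2}\mu_{k-1}$. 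Hence $\Lambda(\rho)=\sup_{k}\max\{p^{2}\mu_{k}+q^{2}\mu_{k+1},\,q^{2}\mu_{k}+p^{2}\mu_{k+1}\}=\sup_{k}\bigl(\tfrac12(\mu_{k}+\mu_{k+1})+\tfrac{m}{2\phi(\rho)}\abs{\mu_{k}-\mu_{k+1}}\bigr)$. Since $\lambda_{k}(\rho)=\frac{\rho\,\psi(\rho)^{2}}{\abs{\phi'(\rho)}}\mu_{k}$ by \eqref{eq:lambda_k} and $\phi(\rho)=\sqrt{\rho^{2}+m^{2}}$, multiplying through turns the last bracket into $\Diraclambda_{k}(\rho)$, so the displayed identity becomes $\widetilde{C}_{2}(\w,\psi,m)=\frac{1}{2\pi}\sup_{k}\sup_{\rho>0}\Diraclambda_{k}(\rho)$ — a priori over $k\in\Z$, but $\lambda_{-k}=\lambda_{k}$ reduces the supremum to $k\in\N$ — which is the assertion.

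The single genuine obstacle is the step just described: reducing the constrained, matrix-valued spectral problem for $\Pi_{\pm}T_{\rho}\Pi_{\pm}$ to a scalar one. This is available for $d=2$ precisely because $\operatorname{Ran}\Pi_{\pm}(\rho\,\cdot\,)$ is a trivialisable line subbundle of $\S^{1}\times\C^{2}$: the global periodic frame $e_{\pm}$ exists, the constraint disappears after a single scalar change of variables, and the residual angular dependence of $e_{\pm}$ collapses to the harmless index shift $\mu_{k}\mapsto\mu_{k\pm1}$. For $d\geq3$ the projections $\Pi_{\pm}$ have rank $N/2\geq2$, there is in general no global frame, $\Pi_{\pm}T_{\rho}\Pi_{\pm}$ no longer commutes with $T_{\rho}$, and it cannot be diagonalised by spherical harmonics — this is the difficulty observed in \cite{Ikoma} and the reason the present paper restricts to radial data. (Two minor points: the answer does not depend on the chosen representation of the $\alpha$–matrices, since different choices are intertwined by a unitary on $\C^{2}$ together with an orthogonal change of variables on $\S^{1}$, neither of which affects $\Lambda(\rho)$; and, as already noted, the case $m=0$ needs nothing beyond discarding a Lebesgue-null set in the time integration.)
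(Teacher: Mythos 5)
Your proposal is correct and uses essentially the same machinery the paper relies on for all of its Dirac results: project onto the $\pm\phi$ eigenspaces via $\Pi_{\pm}$, reduce via Plancherel and the $t$-integration to a quadratic form on each frequency sphere, and diagonalise it by the Funk--Hecke theorem, with the spectral projections shifting the harmonic degree by one and so coupling $\lambda_{k}$ with $\lambda_{k+1}$. Note that the present paper does not actually prove Theorem~\ref{thm:2D Dirac} --- it cites \cite[Theorem 2.2]{Ikoma} and only states the key tool Lemma~\ref{lem:Sf norm decomposition Dirac} --- so the closest in-paper comparison is the proof of Theorem~\ref{thm:1D Dirac}, which computes the constrained quadratic form concretely from the $\alpha,\beta$ relations and diagonalises the resulting finite matrix; your version replaces that algebraic step by the observation that $\operatorname{Ran}\Pi_{\pm}(\rho\,\cdot)$ is a trivialisable line subbundle of $\S^{1}\times\C^{2}$ with global frame $e_{\pm}(\varphi)$, so that the constraint disappears under a single scalar substitution and leaves only the harmless Fourier-index shift $\mu_{k}\mapsto\mu_{k\pm1}$. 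These are two readings of the same computation, and your closing remark --- that the line-bundle trivialisation fails for $d\geq3$ because $\Pi_{\pm}$ has rank $\geq2$ --- correctly identifies the obstruction that motivates the paper's radial restriction.
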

		\begin{theorem}[{\cite[Theorem 2.1]{Ikoma}}]\label{thm:Dirac upper bound}
			Let $d \geq 3$. 
			Then we have
			\begin{equation} 
				\widetilde{C}_{d}(\w,\psi,m) \leq C_{d}(\w,\psi,\phi) = \frac{1}{(2\pi)^{d-1}}\sup_{ k \in \N }\sup_{r>0} \lambda_{k}(r),
			\end{equation}
			where $\lambda_{k}(r)$ is \eqref{eq:lambda_k} with $\phi(r)=\sqrt{r^2+m^2}$.
			In other words, the optimal constant for the Dirac equation is less than or equal to that for the relativistic Schr\"odinger equation.
		\end{theorem}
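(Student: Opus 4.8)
The plan is to exploit the fact that the Dirac operator $H_m$ is, up to its internal matrix structure, a square root of the relativistic Schr\"odinger operator with symbol $\phi(r)=\sqrt{r^2+m^2}$. Since $\alpha\cdot\xi+m\beta$ is Hermitian and $(\alpha\cdot\xi+m\beta)^2=(\abs{\xi}^2+m^2)I_N=\phi(\abs{\xi})^2I_N$, for a.e.\ $\xi$ the matrices
\[
\Pi_{\pm}(\xi)\coloneqq\frac12\mleft(I_N\pm\frac{\alpha\cdot\xi+m\beta}{\phi(\abs{\xi})}\mright)
\]
are complementary orthogonal projections. Writing $\Pi_{\pm}$ for the corresponding Fourier multiplier operators, one has $\Pi_++\Pi_-=I$, $\Pi_+\Pi_-=0$ and $H_m\Pi_{\pm}=\pm\phi(\abs{D})\Pi_{\pm}$, hence
\[
e^{-itH_m}=e^{-it\phi(\abs{D})}\Pi_++e^{it\phi(\abs{D})}\Pi_- .
\]
Applying $\psi(\abs{D})$ we obtain the splitting $\psi(\abs{D})e^{-itH_m}f=h_+(x,t)+h_-(x,t)$, where $h_+\coloneqq\psi(\abs{D})e^{-it\phi(\abs{D})}(\Pi_+f)$ and $h_-\coloneqq\psi(\abs{D})e^{it\phi(\abs{D})}(\Pi_-f)$ are free relativistic Schr\"odinger evolutions, with opposite time orientations, of the $\C^N$-valued data $\Pi_{\pm}f$.

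The decisive step is to show that the cross term in $\int_{\R}\int_{\R^d}\abs{h_++h_-}^2\w(\abs{x})\,dx\,dt$ vanishes. Fix $x$. Since $h_+(x,\cdot)$ is built from the time factors $e^{-it\phi(\abs{\xi})}$ with $\phi(\abs{\xi})\ge m$, its Fourier transform in $t$ is supported in $(-\infty,-m]$; similarly $h_-(x,\cdot)$, built from $e^{it\phi(\abs{\xi})}$, has its Fourier transform in $t$ supported in $[m,\infty)$. These two sets intersect in a set of Lebesgue measure zero, so the Plancherel theorem in $t$ gives $\int_{\R}h_+(x,t)\overline{h_-(x,t)}\,dt=0$ for a.e.\ $x$, and therefore
\[
\int_{\R}\int_{\R^d}\abs{\psi(\abs{D})e^{-itH_m}f}^2\w(\abs{x})\,dx\,dt=\int_{\R}\int_{\R^d}\abs{h_+}^2\w\,dx\,dt+\int_{\R}\int_{\R^d}\abs{h_-}^2\w\,dx\,dt .
\]

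It then remains to bound each summand by the scalar relativistic Schr\"odinger smoothing estimate. Because $\psi(\abs{D})e^{\mp it\phi(\abs{D})}$ acts diagonally on $\C^N$, and because the substitution $t\mapsto-t$ shows that the two time orientations produce the same space-time quadratic form, each of $\int\int\abs{h_{\pm}}^2\w$ decomposes into the $N$ scalar contributions of the components of $\Pi_{\pm}f$, each of which is at most $C_{d}(\w,\psi,\phi)$ times the squared $L^2(\R^d)$-norm of that component; summing over components yields $\int\int\abs{h_{\pm}}^2\w\le C_{d}(\w,\psi,\phi)\,\norm{\Pi_{\pm}f}_{L^2(\R^d,\C^N)}^2$. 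Finally, the Plancherel theorem together with $\Pi_++\Pi_-=I$ and $\Pi_+\Pi_-=0$ gives $\norm{\Pi_+f}^2+\norm{\Pi_-f}^2=\norm{f}^2$, so that $\widetilde{C}_{d}(\w,\psi,m)\le C_{d}(\w,\psi,\phi)$; the explicit form of $C_{d}(\w,\psi,\phi)$ is then exactly Theorem~\ref{thm:Schrodinger} applied with $\phi(r)=\sqrt{r^2+m^2}$, which satisfies Assumption~\ref{assumption}.

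I expect the vanishing of the cross term to be the crux of the argument: it is precisely here that the relativistic lower bound $\phi(r)=\sqrt{r^2+m^2}\ge m$ is used, since this is what separates the time-frequency supports of $h_+$ and $h_-$; without it one would only obtain the estimate with the worse constant $2\,C_{d}(\w,\psi,\phi)$. The underlying Fubini and Plancherel manipulations are justified in the standard way, first for data $f$ whose Fourier transform is compactly supported away from the origin and then by a density argument.
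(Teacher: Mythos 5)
Your argument is correct, and it is a genuinely different route from the one underlying the paper (and Ikoma's original proof). The paper derives this theorem from Lemma~\ref{lem:Sf norm decomposition Dirac}, which first splits $f = f_+ + f_-$ using the same projections $\Pi_\pm(\xi)$ and then expands each piece in spherical harmonics, invoking the Funk--Hecke theorem to identify the exact coefficients $\lambda_k(r)$ attached to each angular mode; the upper bound then follows by replacing each $\lambda_k(r)$ with $\sup_{k,r}\lambda_k(r)$ under the sum. You instead bypass the angular machinery entirely: you observe that the two half-wave evolutions $h_\pm=\psi(\abs{D})e^{\mp it\phi(\abs{D})}\Pi_\pm f$ have temporal frequency supports in $(-\infty,-m]$ and $[m,\infty)$ respectively, conclude by Plancherel in $t$ that the cross term vanishes, and then apply the scalar Theorem~\ref{thm:Schrodinger} componentwise to each piece, using $\norm{\Pi_+f}^2+\norm{\Pi_-f}^2=\norm{f}^2$. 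The point where you need $\phi(r)=\sqrt{r^2+m^2}\ge m$ is exactly the disjointness of the temporal supports, and your density argument (data with $\hat f$ compactly supported away from the origin) is the right way to justify the Plancherel step. Your approach buys a shorter, more self-contained proof of the upper bound that treats Theorem~\ref{thm:Schrodinger} as a black box; what it gives up is the exact decomposition formula of Lemma~\ref{lem:Sf norm decomposition Dirac}, which is the ingredient the paper actually needs later to characterise extremisers and to compute the sharp constants in the radial and two-dimensional cases (Theorems~\ref{thm:2D Dirac} and~\ref{thm:radial Dirac}). So your argument proves the stated inequality cleanly but would not by itself support the finer analysis elsewhere in the paper.
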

		Unlike Schr\"odinger-type equations, the optimal constant is known only in the case $d = 2$ for Dirac equations.
		In particular, the physically most important case $d = 3$ remains open.
		Also, though Theorem \ref{thm:Dirac upper bound} gives upper bounds, there are no known results on lower bounds.
		The main difficulty here is that 
		the multiplier of Dirac operator $H_m$ is not radial.
		Indeed, it causes a large number (growing exponentially with respect to the dimension $d$) of cross terms to appear when computing the $L^2$ norm, which does not exist at all in the case of the Schr\"odinger-type equation.
		
		In this paper, we consider the smoothing estimate of $d$-dimensional Dirac equations with radial initial data. 
		In that case cross terms does not appear, and actually we can obtain optimal constants. 
		Note these optimal constants with the radial restriction serve as lower bounds of constants without the restriction.
		Additionally, we also give optimal constants of the one-dimensional Schr\"odinger-type equation and Dirac equation. 
		You may see that in the case $d = 1$, the factor $\S^{d-2}$ in the definition of $\lambda_k$ given by \eqref{eq:lambda_k}, the surface area of the $(d-2)$-dimensional unit sphere, does not make sense anymore. 
		Nevertheless, by redefining it appropriately, we establish results in the case $d = 1$ using essentially the same argument as that of Theorems \ref{thm:Schrodinger} and \ref{thm:2D Dirac}.
		
		This paper is organized as follows.
		
		In Section \ref{subsection:main results Schrodinger} and Section \ref{subsection:main results Dirac}, we state results regarding Schr\"odinger-type equations and Dirac equations, respectively. 
		
		In Section \ref{section:preliminaries}, we introduce two classical results on spherical harmonics: the spherical harmonics decomposition and the Funk--Hecke theorem. 
		They lead us to Lemma \ref{lem:Sf norm decomposition Schrodinger} and Lemma \ref{lem:Sf norm decomposition Dirac}, which were established by \cite{BSS} and \cite{Ikoma}, respectively. 
		These lemmas played an important role in their respective papers, and will again do in the present paper.
		Here we remark that the spherical harmonics decomposition and the Funk--Hecke theorem are usually considered in the case $d \geq 2$, 
		but they are also true in the case $d = 1$. 
		
		In Section \ref{section:proof}, we prove main results.
		Though the proofs for the Schr\"odinger-type equation are essentially the same as that of \cite[Theorem 1.1]{BSS}, 
		we give details for comparison with the proofs for the Dirac equation. 
		In fact, you may see that the proofs for the Dirac equation are much more complicated.
		
		\section{Main results} \label{section:main results}
		In this section, we discuss the main results. 
		Through out this paper, we always suppose that the following Assumption \ref{assumption} holds:
		\begin{assum} \label{assumption}
			The spatial weight $\w$, the smoothing function $\psi$, and the dispersion relation $\phi$ satisfy the following conditions.
			\begin{description}
				\item[The spatial weight]
				\phantom{a}
				\begin{itemize}
					\item In the case $d = 1$, 
					the spatial weight $\w(\abs{\,\cdot\,})\in L^1(\R)$ is positive and even, in which case we write
					\begin{equation}
						\widehat{\w(\abs{\,\cdot\,})}(\xi) = F_{\w}(\tfrac{1}{2}\abs{\xi}^2) \tag*{\eqref{w_Fourier}}
					\end{equation}
					for some $F_{\w}:[0, \infty)\to\R$ that is uniformly continuous and bounded on $[0, \infty)$.
					\item In the case $d \geq 2$,
					the spatial weight $\w(\abs{\,\cdot\,})$ is positive, radial, and its Fourier transform (in the sense of tempered distributions) coincides some locally bounded function on $\R^d \setminus \{0\}$, in which case we write
					\begin{equation}
						\widehat{\w(\abs{\,\cdot\,})}(\xi) = F_{\w}(\tfrac{1}{2}\abs{\xi}^2) . \tag*{\eqref{w_Fourier}}
					\end{equation}
					Furthermore, $F_w$ is sufficiently nice to guarantee the continuity of $\lambda_{k} \colon \R_{>0} \to \R_{>0}$ for each $k\in\N_{0}$, here recall that
					\begin{equation}
						\lambda_{k}(r)\coloneqq \abs{\S^{d-2}}\frac{r^{d-1}\psi(r)^2}{\abs{\phi^{\prime}(r)}}\int_{-1}^{1}F_{\w}(r^2(1-t))p_{d,k}(t)(1-t^2)^{\frac{d-3}{2}} \, dt. \tag{\ref{eq:lambda_k}}
					\end{equation}
				\end{itemize}
				\item[The smoothing function]
				\phantom{a}
				\begin{itemize}
					\item In the case $d=1$, the smoothing function $\psi \colon (0, \infty) \to \R$ is continuous and non-negative.
					\item In the case $d=2$, the smoothing function $\psi \colon (0, \infty) \to \R$ is infinitely differentiable and non-negative.
				\end{itemize}
				\item[The dispersion relation]
				\phantom{a}
				\begin{itemize}
					\item 
					For Sch\"odinger equations, 
					the dispersion relation $\phi \colon (0, \infty) \to \R$ is continuously differentiable and injective.
					\item
					For Dirac equations, 
					the dispersion relation $\phi \colon (0, \infty) \to \R$ is given by $\phi(r) \coloneqq \sqrt{r^2 + m^2}$. Note that it is continuously differentiable and injective.
				\end{itemize}
			\end{description}
		\end{assum}
		\subsection{Schr\"odinger-type equations} \label{subsection:main results Schrodinger}
		In this section, we discuss Schr\"odinger-type equations. 
		Define the linear operator $S \colon L^2(\R^d) \to L^2(\R^{d+1})$ by
		\begin{equation}
			Sf(x,t)\coloneqq \w(\abs{x})^{1/2}\int_{\xi \in \R^d} e^{ix \cdot \xi} \psi(\abs{\xi}) e^{ i t \phi(\abs{\xi}) } f(\xi) \, d\xi ,
		\end{equation}
		in other words,
		\begin{equation}
			S\widehat{f}(x, t) = \abs{\psi(\abs{D}) e^{it\phi(\abs{D})}f(x)} \w(\abs{x})^{1/2} .
		\end{equation}
		Then it is obvious that the smoothing estimate \eqref{eq:smoothing Schrodinger} is equivalent to 
		\begin{equation}
			\norm{S \hat{f}}_{L^2(\R^{d+1})}^2 \leq C \norm{f}_{L^2(\R^d)}^2 = (2 \pi)^d C \norm{\hat{f}}_{L^2(\R^d)}^2 ,
		\end{equation}
		and therefore it is enough to consider the operator norm of $S$.
		
		At first we discuss the one dimensional case. 
		For simplicity, let
		\begin{equation}
			\norm{S} \coloneqq \norm{S}_{L^2(\R) \to L^2(\R^2)} .
		\end{equation}
		\begin{theorem}\label{thm:1D Schrodinger}
			Let $d = 1$ and write
			\begin{gather}
				\lambda_k(r) \coloneqq 
				\begin{dcases}
					\frac{\psi(r)^2}{\abs{ \phi^{\prime}(r) } } ( \norm{\w}_{L^1(\R)} + F_{\w}(2r^2) ) , & k = 0 , \\
					\frac{\psi(r)^2}{\abs{ \phi^{\prime}(r) } } ( \norm{\w}_{L^1(\R)} - F_{\w}(2r^2) ) , & k = 1 , 
				\end{dcases}  
				\label{eq:lambda_k 1D}
				\\
				\lambda^* 
				\coloneqq \sup_{r > 0} \max_{k = 0, 1} \lambda_k(r) 
				= \sup_{r > 0} \frac{\psi(r)^2}{\abs{ \phi^{\prime}(r) } } ( \norm{\w}_{L^1(\R)} + \abs{ F_{\w}(2r^2) } ) , \\
				E_k \coloneqq \set{ r>0 }{ \lambda_k(r) = \lambda^* } .
			\end{gather}
			Then, under Assumption \ref{assumption}, we have
			\begin{equation} \label{optimal_answer_3}
				\norm{S}^2 = 2 \pi \lambda^* .
			\end{equation}
			
			Regarding extremisers, fix $f \in L^2(\R)$ and decompose it as
			\begin{equation}
				f(\xi) = \fk{0}(\abs{\xi}) + \sgn (\xi) \fk{1}(\abs{\xi}) .
			\end{equation}
			Then the following are equivalent:
			\begin{eqenumerate}
				\eqitem \label{item:extremiser 1D Schrodinger}
				The equality 
				\begin{equation}
					\norm{Sf}_{L^2(\R^2)}^2 = 2 \pi \lambda^* \norm{f}^2_{L^2(\R)} 
				\end{equation}
				holds.
				\eqitem \label{item:extremiser condition 1D Schrodinger}
				The functions $\{ \fk{k} \}_{k = 0, 1}$ satisfy
				\begin{equation}
					\supp \fk{k} \subset E_k 
				\end{equation}
				for each $k = 0, 1$.
			\end{eqenumerate}
			As a consequence, extremisers exist if and only if there exists 
			$k \in \{ 0, 1 \}$ such that $\measure{E_k} > 0$.
		\end{theorem}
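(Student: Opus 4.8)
The plan is to reduce everything to the one-dimensional analogue of Lemma~\ref{lem:Sf norm decomposition Schrodinger}. First I would record the $d=1$ spherical harmonics decomposition: on $\S^0 = \{-1, +1\}$ the only spherical harmonics are the even function $Y_0 \equiv \text{const}$ and the odd function $Y_1(\omega) = \sgn(\omega)\cdot\text{const}$, so that any $f \in L^2(\R)$ splits uniquely as $f(\xi) = \fk{0}(\abs{\xi}) + \sgn(\xi)\fk{1}(\abs{\xi})$ with $\norm{f}_{L^2(\R)}^2 = 2\int_0^\infty (\abs{\fk{0}(r)}^2 + \abs{\fk{1}(r)}^2)\,dr$. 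The Funk--Hecke theorem degenerates to the trivial statement that convolution on $\{-1,+1\}$ is multiplication by the two "eigenvalues" $g(1-1\cdot1) = g(0)$ and $g(1-1\cdot(-1)) = g(2)$ of the kernel; with the kernel coming from $\w$ via \eqref{w_Fourier} this produces exactly the two expressions $\norm{\w}_{L^1(\R)} \pm F_{\w}(2r^2)$ appearing in \eqref{eq:lambda_k 1D} (the $+$ case corresponding to $k=0$, the $-$ case to $k=1$), up to the factor $\psi(r)^2/\abs{\phi'(r)}$ that is the usual change of variables $\xi \mapsto \phi(\abs{\xi})$ together with $\abs{e^{it\phi}}$-time integration producing a delta in $\phi$. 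Carrying this out gives a one-dimensional version of the decomposition lemma,
\begin{equation}
	\norm{Sf}_{L^2(\R^2)}^2 = 2\pi \sum_{k=0,1} \int_0^\infty \lambda_k(r) \abs{\fk{k}(r)}^2 \, dr ,
\end{equation}
which I would state and prove as a lemma before touching the theorem itself; this is the step I expect to be the main obstacle, since one must be careful with the degenerate $(1-t^2)^{(d-3)/2}$ weight at $d=1$ and verify that all the manipulations of \cite{BSS} survive in this limiting case (indeed the paper flags exactly this point in the introduction).

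Granted this lemma, the norm identity \eqref{optimal_answer_3} is immediate: from $\norm{Sf}^2 = 2\pi \sum_k \int_0^\infty \lambda_k(r)\abs{\fk{k}(r)}^2\,dr \le 2\pi \lambda^* \sum_k \int_0^\infty \abs{\fk{k}(r)}^2\,dr = 2\pi\lambda^* \cdot \tfrac12\norm{f}_{L^2(\R)}^2 \cdot 2 / 2$—wait, more precisely $\sum_k\int_0^\infty\abs{\fk k}^2 = \tfrac12\norm f_{L^2(\R)}^2$, hence $\norm{Sf}^2 \le 2\pi\lambda^* \cdot \tfrac12 \cdot 2 \norm{f}^2$; I would track the factor of $2$ from the even/odd splitting carefully so the constant comes out as $2\pi\lambda^*$ and not $\pi\lambda^*$ or $4\pi\lambda^*$. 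This gives $\norm{S}^2 \le 2\pi\lambda^*$. For the reverse inequality and for the optimality of the constant, I would take, for $\varepsilon > 0$ and $k$ achieving the supremum, a radial profile $\fk{k}$ supported in a small neighbourhood of a near-maximiser $r_0$ of $\lambda_k$ (continuity of $\lambda_k$, guaranteed by Assumption~\ref{assumption}, makes $\lambda_k(r) \ge \lambda^* - \varepsilon$ there) and $\fk{j} = 0$ for $j \ne k$; the decomposition lemma then gives $\norm{Sf}^2 \ge 2\pi(\lambda^*-\varepsilon)\norm f^2$, and letting $\varepsilon \to 0$ finishes \eqref{optimal_answer_3}.

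For the extremiser characterisation, the equivalence of \eqref{item:extremiser 1D Schrodinger} and \eqref{item:extremiser condition 1D Schrodinger} is a direct equality-analysis of the chain of inequalities above. If $\supp\fk k \subset E_k$ for each $k$, then $\lambda_k(r) = \lambda^*$ for a.e.\ $r$ in the support of $\fk k$, so every inequality is an equality and \eqref{item:extremiser 1D Schrodinger} holds. Conversely, if $\norm{Sf}^2 = 2\pi\lambda^*\norm f^2$ with $f \not\equiv 0$, then $\sum_k \int_0^\infty (\lambda^* - \lambda_k(r))\abs{\fk k(r)}^2\,dr = 0$; since each integrand is non-negative (as $\lambda_k \le \lambda^*$), it must vanish a.e., forcing $\abs{\fk k(r)}^2(\lambda^* - \lambda_k(r)) = 0$ a.e., i.e.\ $\fk k$ is supported in $E_k$ up to a null set, which is \eqref{item:extremiser condition 1D Schrodinger}. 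Finally, the "as a consequence" clause: if some $\measure{E_k} > 0$, pick $\fk k = \indicator{E_k}$ (or any nonzero $L^2$ function supported in $E_k$) and $\fk j = 0$ otherwise to get a genuine extremiser; conversely, if an extremiser $f \not\equiv 0$ exists, some $\fk k \not\equiv 0$, and by the equivalence $\supp \fk k \subset E_k$, so $\measure{E_k} \ge \measure{\supp\fk k} > 0$. The only subtlety worth a remark is that $E_k$ is closed (preimage of a point under the continuous $\lambda_k$, intersected with the—possibly not relatively closed—issue of whether the sup is attained), but positivity of Lebesgue measure is all that is used, so no topological input beyond measurability is needed.
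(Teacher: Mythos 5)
Your proposal follows essentially the same route as the paper: the $d=1$ even/odd decomposition, the degenerate one-dimensional Funk--Hecke statement (which the paper records as Theorem~\ref{thm:Funk-Hecke 1D}), the resulting one-dimensional version of Lemma~\ref{lem:Sf norm decomposition Schrodinger}, and then a standard equality analysis for the two implications and the ``as a consequence'' clause. The only loose end is the factor-of-two bookkeeping that you flag but do not resolve: the unexplained ``$\cdot\, 2$'' in the chain $\norm{Sf}^2 \le 2\pi\lambda^*\cdot\tfrac12\cdot 2\,\norm{f}^2$ is a fudge as written. The clean fix, and the one the paper adopts, is to carry out the entire argument with the \emph{orthonormal} basis $\Pk{0}(\theta)=1/\sqrt{2}$, $\Pk{1}(\theta)=\theta/\sqrt{2}$ of $\HP_0(\R)\oplus\HP_1(\R)$, so that $f(\xi)=\sum_{k=0,1}\Pk{k}(\xi/\abs{\xi})\fk{k}(\abs{\xi})$ gives $\norm{f}^2_{L^2(\R)}=\norm{\fk{0}}^2_{L^2(\positiveR)}+\norm{\fk{1}}^2_{L^2(\positiveR)}$ with no extra constant, and Lemma~\ref{lem:Sf norm decomposition Schrodinger} then reads $\norm{Sf}^2_{L^2(\R^2)}=2\pi\sum_k\int_0^\infty\lambda_k(r)\abs{\fk{k}(r)}^2\,dr$ verbatim; the constant $2\pi\lambda^*$ falls out immediately. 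The discrepancy between this normalisation and the one in the theorem statement ($f=\fk{0}+\sgn(\xi)\fk{1}$) is a harmless $\sqrt{2}$ rescaling of $\fk{k}$, and since the extremiser condition \eqref{item:extremiser condition 1D Schrodinger} only concerns supports, it is scale-invariant.
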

		Next we consider smoothing estimates with radial data. 
		For simplicity we write
		\begin{equation}
			\norm{S}_{\rad} \coloneqq \norm{S}_{L^2_\rad(\R^d) \to L^2(\R^{d+1})} ,
		\end{equation}
		here $L^2_\rad(\R^d)$ denotes the subspace of $L^2(\R^d)$ defined by
		\begin{equation}
			L^2_{\rad}(\R^d) \coloneqq \set{ f\in L^2(\R^d)}{ \text{$f$ is radial} }.
		\end{equation}
		\begin{theorem}\label{thm:radial Schrodinger}
			Let $d \geq 1$ and write
			\begin{gather}
				\lambda_\rad(r) \coloneqq \lambda_0(r) , \\
				\lambda_\rad^* \coloneqq \sup_{r > 0} \lambda_\rad(r) , \\
				E_{\rad} \coloneqq \set{ r>0 }{ \lambda_\rad(r) = \lambda_\rad^* }.
			\end{gather}
			Then, under Assumption \ref{assumption}, we have
			\begin{equation}
				\norm{S}_{\rad}^2 = 2 \pi \lambda_\rad^* .
			\end{equation}
			
			Regarding extremisers, 
			fix $f \in L^2_\rad(\R^d)$ and write
			\begin{equation}
				f(\xi) = \abs{\xi}^{-(d-1)/2} \fk{0}(\xi) .
			\end{equation}
			Then the following are equivalent:
			\begin{eqenumerate}
				\eqitem \label{item:extremiser radial Schrodinger}
				The equality 
				\begin{equation}
					\norm{Sf}_{L^2(\R^{d+1})}^2 = 2 \pi \lambda_\rad^* \norm{f}^2_{L^2(\R^d)}
				\end{equation}
				holds.
				\eqitem \label{item:extremiser condition radial Schrodinger}
				The function $\fk{0} \in L^2(\positiveR)$ satisfies
				\begin{equation}
					\supp{ \fk{0} } \subset E_\rad .
				\end{equation}
			\end{eqenumerate}
			As a consequence, extremisers exist if and only if $\measure{E_{\rad}} > 0$.
		\end{theorem}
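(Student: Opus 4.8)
The plan is to reduce everything to the radial ($k=0$) mode of the spherical harmonics decomposition, after which the norm identity of Lemma~\ref{lem:Sf norm decomposition Schrodinger} does essentially all of the work. First I would note that the constant function spans the space of spherical harmonics of degree $0$, so a radial $f\in L^2_\rad(\R^d)$ has spherical harmonics expansion consisting of the degree-zero term alone; in the notation $f(\xi)=\abs{\xi}^{-(d-1)/2}\fk 0(\xi)$ of the statement, $\fk 0\in L^2(\positiveR)$ is exactly its normalised radial profile. Feeding this into Lemma~\ref{lem:Sf norm decomposition Schrodinger} and discarding the (vanishing) contributions of the modes of degree $k\geq 1$ leaves, for every nonzero radial $f$,
\begin{equation}
	\frac{\norm{Sf}_{L^2(\R^{d+1})}^2}{\norm{f}_{L^2(\R^d)}^2}
	= 2\pi\,\frac{\int_0^\infty \lambda_0(r)\,\abs{\fk 0(r)}^2\,dr}{\int_0^\infty \abs{\fk 0(r)}^2\,dr} ,
\end{equation}
that is, $2\pi$ times a weighted average of $\lambda_0=\lambda_\rad$ against $\abs{\fk 0}^2$. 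For $d\geq 2$ this is immediate from the spherical harmonics decomposition and the Funk--Hecke theorem, exactly as in \cite{BSS}; in the case $d=1$ the same computation applies once \emph{radial} is read as \emph{even}, the surface-area factor $\abs{\S^{d-2}}$ in \eqref{eq:lambda_k} is replaced by the convention that makes $\lambda_0$ coincide with \eqref{eq:lambda_k 1D}, and the one-dimensional form of the Funk--Hecke theorem from Section~\ref{section:preliminaries} is used.

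Granting this, the operator-norm identity follows at once. Since $\lambda_0=\lambda_\rad\leq\lambda_\rad^*$ pointwise on $\positiveR$, the weighted average above is always at most $\lambda_\rad^*$, and hence $\norm{S}_{\rad}^2\leq 2\pi\lambda_\rad^*$. For the matching lower bound I would fix $\varepsilon>0$: by the continuity of $\lambda_\rad$ built into Assumption~\ref{assumption}, the set $\set{r>0}{\lambda_\rad(r)>\lambda_\rad^*-\varepsilon}$ is open and non-empty, hence has positive Lebesgue measure, so it contains a subset $A$ with $0<\measure{A}<\infty$; taking $\fk 0=\indicator{A}$ and letting $f$ be the corresponding radial element of $L^2_\rad(\R^d)$, the ratio above equals $2\pi\,\measure{A}^{-1}\int_A\lambda_\rad(r)\,dr\geq 2\pi(\lambda_\rad^*-\varepsilon)$. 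Letting $\varepsilon\downarrow 0$ gives $\norm{S}_{\rad}^2\geq 2\pi\lambda_\rad^*$, whence $\norm{S}_{\rad}^2=2\pi\lambda_\rad^*$.

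For the extremiser statement I would read off when the weighted average above attains its supremum $\lambda_\rad^*$. For nonzero $f\in L^2_\rad(\R^d)$ with profile $\fk 0$, the identity of the first paragraph shows that $\norm{Sf}_{L^2(\R^{d+1})}^2=2\pi\lambda_\rad^*\norm{f}_{L^2(\R^d)}^2$ holds if and only if
\begin{equation}
	\int_0^\infty \bigl(\lambda_\rad^*-\lambda_\rad(r)\bigr)\abs{\fk 0(r)}^2\,dr=0 ;
\end{equation}
since the integrand is non-negative, this is equivalent to $\fk 0$ vanishing almost everywhere outside $E_\rad=\set{r>0}{\lambda_\rad(r)=\lambda_\rad^*}$, i.e.\ to $\supp\fk 0\subset E_\rad$. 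This proves the equivalence of \eqref{item:extremiser radial Schrodinger} and \eqref{item:extremiser condition radial Schrodinger} (the case $f=0$ being trivial on both sides). Finally, a nonzero $\fk 0\in L^2(\positiveR)$ with $\supp\fk 0\subset E_\rad$ exists if and only if $\measure{E_\rad}>0$: if $\measure{E_\rad}>0$ one may take $\fk 0=\indicator{A}$ for any $A\subset E_\rad$ with $0<\measure{A}<\infty$, whereas no nonzero $L^2$ function is supported in a null set. This is the stated criterion for the existence of extremisers.

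The argument is soft once Lemma~\ref{lem:Sf norm decomposition Schrodinger} is available, and the only steps needing genuine care are the reduction in the first paragraph --- checking that radiality annihilates every spherical harmonic of positive degree and that the surviving normalisation is precisely $f(\xi)=\abs{\xi}^{-(d-1)/2}\fk 0(\xi)$ --- together with the $d=1$ bookkeeping concerning the meaning of \emph{radial} and of the symbol $\abs{\S^{d-2}}$. The possibility that the supremum defining $\lambda_\rad^*$ is not attained (so that $E_\rad$ may be empty or a null set) needs no separate treatment: it is already absorbed into the $\varepsilon$-approximation of the second paragraph.
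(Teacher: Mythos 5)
Your proposal is correct and follows essentially the same route as the paper's own proof: reduce to the $k=0$ term of the spherical harmonics decomposition, apply Lemma~\ref{lem:Sf norm decomposition Schrodinger} to express $\norm{Sf}^2/\norm{f}^2$ as a weighted average of $\lambda_\rad$ against $\abs{\fk{0}}^2$, obtain the sharp constant by an $\varepsilon$-approximation using continuity of $\lambda_\rad$, and characterise extremisers via the vanishing of $\int_0^\infty(\lambda_\rad^*-\lambda_\rad)\abs{\fk 0}^2\,dr$. The only differences (spelling out the indicator-function test family and handling the $d=1$ bookkeeping slightly more explicitly) are cosmetic.
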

		\subsection{Dirac equations} \label{subsection:main results Dirac}
		As in the case of Schr\"odinger-type equations, we define the linear operator $\DiracS \colon L^2(\R^d, \C^N) \to L^2(\R^{d+1}, \C^N)$ by 
		\begin{equation}
			\DiracS f(x,t) \coloneqq \w(\abs{x})^{1/2} \int_{\R^d} e^{ix \cdot \xi}\psi(\abs{\xi})e^{-itA_{\xi}}f(\xi)\, d\xi ,
		\end{equation}
		where
		\begin{equation}
			A_\xi \coloneqq \alpha\cdot \xi + m\beta = \sum_{j=1}^{d}\alpha_j \xi_j + m\beta ,
		\end{equation}
		and we consider the operator norms of $\DiracS$:
		\begin{gather}
			\norm{\DiracS} \coloneqq \norm{\DiracS}_{L^2(\R^d, \C^N) \to L^2(\R^{d+1}, \C^N)} , \\
			\norm{\DiracS}_{\rad} \coloneqq \norm{\DiracS}_{L^2_\rad(\R^d, \C^N) \to L^2(\R^{d+1}, \C^N)} .
		\end{gather}
		
		At first we discuss the one dimensional case. 
		\begin{theorem}\label{thm:1D Dirac}
			Let $d = 1$ and write
			\begin{gather}
				\Diraclambda(r) \coloneqq \frac{\psi(r)^2}{ \abs{ \phi^{\prime}(r) } } 
				\mleft( \norm{\w}_{L^1(\R)} + \frac{m}{\phi(r)} \abs{ F_{\w}(2r^2) } \mright) \label{eq:lambda 1D Dirac} , \\
				\Diraclambda^* \coloneqq \sup_{r>0} \Diraclambda(r) , \\
				\DiracE \coloneqq \set{ r>0 }{ \Diraclambda(r) = \Diraclambda^* }.
			\end{gather}
			Then, under Assumption \ref{assumption}, we have
			\begin{equation} \label{optimal_answer_4}
				\norm{\DiracS}^2 = 2 \pi \Diraclambda^* .
			\end{equation}
			
			Regarding extremisers, 
			fix $f \in L^2(\R, \C^2)$ and decompose it as
			\begin{equation}
				f(\xi) = \fk{0}(\abs{\xi}) + \sgn (\xi) \fk{1}(\abs{\xi}) .
			\end{equation}
			Then the following are equivalent:
			\begin{eqenumerate}
				\eqitem \label{item:extremiser 1D Dirac}
				The equality 
				\begin{equation}
					\norm{\DiracS f}_{L^2(\R^2, \C^2)}^2 = 2 \pi \lambda^* \norm{f}^2_{L^2(\R, \C^2)} 
				\end{equation}
				holds.
				\eqitem \label{item:extremiser condition 1D Dirac} 
				The functions $\{ \fk{k} \}_{k = 0, 1}$ satisfy
				\begin{equation}
					\supp \fk{k} \subset \DiracE
				\end{equation}
				for each $k = 0, 1$ and
				\begin{equation}
					\begin{pmatrix}
						\beta \fk{0}(r) \\
						\alpha \fk{1}(r) 
					\end{pmatrix}
					\in \eigenspace(r)
				\end{equation}
				for almost every $r > 0$,
				where $\eigenspace(r)$ is the linear subspace of $\C^4$ defined as follows:
				\begin{equation} \label{eq:eigenspace 1D}
					\eigenspace(r) = 
					\begin{dcases}
						\C^4, & m F_\w(2r^2) = 0 , \\
						\Set{ 
							s
							\begin{pmatrix}
								m + \phi(r) \\
								0 \\
								r \\
								0
							\end{pmatrix}
							+ 
							t 
							\begin{pmatrix}
								0 \\
								m + \phi(r) \\
								0 \\
								r 
							\end{pmatrix}
						}
						{ s, t \in \C } , 
						& m F_\w(2r^2) > 0 , \\
						\Set{ 
							s
							\begin{pmatrix}
								m - \phi(r) \\
								0 \\
								r \\
								0
							\end{pmatrix}
							+ 
							t 
							\begin{pmatrix}
								0 \\
								m - \phi(r) \\
								0 \\
								r 
							\end{pmatrix}
						}
						{ s, t \in \C } , 
						& m F_\w(2r^2) < 0 .
					\end{dcases}
				\end{equation}
			\end{eqenumerate}
			As a consequence, extremisers exist if and only if $\measure{\DiracE} > 0$.
		\end{theorem}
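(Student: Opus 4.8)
The plan is to mirror the proof of Theorem~\ref{thm:1D Schrodinger}, replacing the scalar phase $e^{it\phi(\abs{\xi})}$ by the matrix exponential $e^{-itA_\xi}$ and keeping track of the additional finite-dimensional linear algebra that the non-scalar symbol creates; this is the one-dimensional analogue of the computation behind Theorem~\ref{thm:2D Dirac}. Since $\norm{\DiracS}^2 = \norm{\adjoint{\DiracS}\DiracS}$, it suffices to compute $\norm{\DiracS f}_{L^2(\R^2,\C^2)}^2$ for $f \in L^2(\R,\C^2)$ and to optimise over $f$. First I would diagonalise $A_\xi = \alpha\xi + m\beta$: it is Hermitian and $A_\xi^2 = (\xi^2+m^2)I_2 = \phi(\abs{\xi})^2 I_2$, hence $e^{-itA_\xi} = e^{-it\phi(\abs{\xi})}P_+(\xi) + e^{it\phi(\abs{\xi})}P_-(\xi)$ with the spectral projections $P_\pm(\xi) = \tfrac12\mleft(I_2 \pm A_\xi/\phi(\abs{\xi})\mright)$. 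Expanding $\abs{\DiracS f}^2$ through this decomposition and performing the $t$-integration first, the terms pairing the $\pm$-eigenspace of $A_\xi$ with the $\mp$-eigenspace of $A_\eta$ vanish identically because $\phi(\abs{\xi}) + \phi(\abs{\eta}) > 0$ (the degenerate case $m=0$ being handled by a measure-zero argument, as for the Schr\"odinger-type equation), whereas the terms pairing eigenspaces of the same sign force $\phi(\abs{\xi}) = \phi(\abs{\eta})$, hence $\abs{\xi} = \abs{\eta}$ by injectivity of $\phi$. Integrating in $x$ then produces $\norm{\w}_{L^1(\R)}$ from the contribution $\eta = \xi$ and $F_{\w}(2r^2)$ from $\eta = -\xi$, exactly as in the proof of Theorem~\ref{thm:1D Schrodinger}; using $P_+ + P_- = I_2$ on the diagonal part and setting $M(r) \coloneqq P_+(-r)P_+(r) + P_-(-r)P_-(r)$ for the antipodal one, I obtain
\begin{equation}
	\norm{\DiracS f}_{L^2(\R^2,\C^2)}^2 = 2\pi\int_0^\infty \frac{\psi(r)^2}{\abs{\phi'(r)}}\mleft[ \norm{\w}_{L^1(\R)}\mleft(\abs{f(r)}^2 + \abs{f(-r)}^2\mright) + 2F_{\w}(2r^2)\,\Re\Innerproduct{M(r)f(r)}{f(-r)} \mright]\,dr ,
\end{equation}
which is the $d = 1$ counterpart of Lemma~\ref{lem:Sf norm decomposition Dirac}, obtained via the $\S^0 = \{\pm1\}$ form of the Funk--Hecke identity.

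The core of the proof is the evaluation of $M(r)$: using the Clifford relations $\alpha^2 = \beta^2 = I_2$ and $\alpha\beta + \beta\alpha = 0$ one finds $M(r) = \tfrac{m}{\phi(r)^2}\mleft(m I_2 + r\,\beta\alpha\mright)$. Inserting the $\S^0$-decomposition $f(\xi) = \fk{0}(\abs{\xi}) + \sgn(\xi)\fk{1}(\abs{\xi})$, so that $f(\pm r) = \fk{0}(r)\pm\fk{1}(r)$, and collecting terms, $\norm{\DiracS f}^2$ takes the form $2\pi\int_0^\infty 2\,\Innerproduct{\Quadmatrix(r)\,g(r)}{g(r)}\,dr$ with $g(r) = (\fk{0}(r),\fk{1}(r)) \in \C^2 \oplus \C^2 = \C^4$ and a Hermitian $4\times4$ matrix $\Quadmatrix(r)$, while $\norm{f}_{L^2(\R,\C^2)}^2 = 2\int_0^\infty\abs{g(r)}^2\,dr$; hence $\norm{\DiracS}^2 = 2\pi\sup_{r>0}\lambda_{\max}(\Quadmatrix(r))$. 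Writing $\lambda_0,\lambda_1$ for the quantities of \eqref{eq:lambda_k 1D} with $\phi(r) = \sqrt{r^2+m^2}$, a short computation gives
\begin{equation}
	\Quadmatrix(r) = \frac{\lambda_0(r)+\lambda_1(r)}{2}\,I_4 + \frac{\lambda_0(r)-\lambda_1(r)}{2}\cdot\frac{m}{\phi(r)^2}
	\begin{pmatrix} m I_2 & r\,\beta\alpha \\ -r\,\beta\alpha & -m I_2 \end{pmatrix} ,
\end{equation}
in which the block matrix on the right is Hermitian, has zero trace, and squares to $\phi(r)^2 I_4$, so that it has the eigenvalues $\pm\phi(r)$, each of multiplicity $2$. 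Consequently $\Quadmatrix(r)$ has eigenvalues $\tfrac12\mleft(\lambda_0+\lambda_1\pm\tfrac{m}{\phi(r)}(\lambda_0-\lambda_1)\mright)$, whence $\lambda_{\max}(\Quadmatrix(r)) = \tfrac12\mleft(\lambda_0(r)+\lambda_1(r)+\tfrac{m}{\phi(r)}\abs{\lambda_0(r)-\lambda_1(r)}\mright) = \Diraclambda(r)$, the one-dimensional analogue of the formula of Theorem~\ref{thm:2D Dirac}. Taking the supremum over $r$ proves $\norm{\DiracS}^2 = 2\pi\Diraclambda^*$, i.e.\ \eqref{optimal_answer_4}.

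The extremiser characterisation then follows the usual pattern. Equality in \ref{item:extremiser 1D Dirac} holds exactly when, for almost every $r$, the vector $g(r) = (\fk{0}(r),\fk{1}(r))$ is either zero or a top eigenvector of $\Quadmatrix(r)$ and, moreover, $g$ is supported on $\set{r>0}{\Diraclambda(r)=\Diraclambda^*} = \DiracE$; the latter amounts to $\supp\fk{k}\subset\DiracE$ for $k=0,1$. It remains to describe the top eigenspace of $\Quadmatrix(r)$. If $mF_{\w}(2r^2)=0$ the second summand vanishes ($\lambda_0(r)=\lambda_1(r)$ when $F_{\w}(2r^2)=0$, and the factor $m/\phi(r)^2$ is zero when $m=0$), so $\Quadmatrix(r)$ is a scalar multiple of $I_4$ and the top eigenspace is all of $\C^4$. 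If $mF_{\w}(2r^2)>0$ (resp.\ $<0$) the top eigenspace is the $+\phi(r)$- (resp.\ $-\phi(r)$-) eigenspace of the block matrix above; solving that eigenvalue equation forces $\fk{1}(r)$ to be a fixed scalar multiple of $\beta\alpha\,\fk{0}(r)$, and applying $\alpha$ and using $\alpha\beta\alpha = -\beta$ together with $(\phi(r)-m)(\phi(r)+m)=r^2$ turns this into the proportionality between $\beta\fk{0}(r)$ and $\alpha\fk{1}(r)$ that, by inspection, is precisely $\bigl(\beta\fk{0}(r),\alpha\fk{1}(r)\bigr)\in\eigenspace(r)$ with $\eigenspace(r)$ as in \eqref{eq:eigenspace 1D}. (These matrix computations may be carried out in the fixed representation $\alpha=\sigma_1$, $\beta=\sigma_3$, since any pair of anticommuting Hermitian involutions on $\C^2$ is unitarily equivalent to it.) Finally, extremisers exist if and only if $\measure{\DiracE}>0$: if $\measure{\DiracE}=0$ no nonzero $L^2$ function is supported on $\DiracE$, while if $\measure{\DiracE}>0$ one builds an extremiser by taking $\fk{0}$ to be the indicator of a finite-measure subset of $\DiracE$ times a suitable unit vector and $\fk{1}$ its paired component.

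The step I expect to be the main obstacle is the explicit evaluation of $M(r)$ together with the verification that the resulting top eigenspace of $\Quadmatrix(r)$ coincides with the subspace $\eigenspace(r)$ of \eqref{eq:eigenspace 1D}; both require careful bookkeeping of the Clifford matrices $\alpha,\beta$ and of the signs arising from the $\S^0$-decomposition of $f$. A secondary technical point, handled as in \cite{BSS} and in the proof of Theorem~\ref{thm:1D Schrodinger}, is to make the formal $t$-integration rigorous by working throughout with the operator $\adjoint{\DiracS}\DiracS$ rather than with $\DiracS$ itself.
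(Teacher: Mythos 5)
Your proposal is correct and follows essentially the same route as the paper: decompose $f$ via $\mathcal{H}_0(\R)\oplus\mathcal{H}_1(\R)$, apply (a re-derivation of) the $d=1$ case of Lemma~\ref{lem:Sf norm decomposition Dirac}, reorganise the integrand into a Hermitian $4\times 4$ quadratic form whose top eigenvalue is $\Diraclambda(r)$, and read off the extremiser condition from the top eigenspace. The only cosmetic difference is that you keep the variable $g(r)=(\fk{0}(r),\fk{1}(r))$ and obtain a block matrix with off-diagonal blocks $\pm r\beta\alpha$, whereas the paper conjugates by the unitary $\mathrm{diag}(\beta,\alpha)$ to work with $(\beta\fk{0}(r),\alpha\fk{1}(r))$, which produces the representation-independent block matrix $\bigl(\begin{smallmatrix} m I_2 & r I_2 \\ r I_2 & -m I_2\end{smallmatrix}\bigr)$ whose eigenspaces are exactly those appearing in \eqref{eq:eigenspace 1D}.
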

	For the case with radial data, we have the following result:
		\begin{theorem}\label{thm:radial Dirac}
			Let $d \geq 2$ and write
			\begin{gather}
				\Diraclambda_\rad(r)\coloneqq \frac{1}{2}\left\{\left(1+\frac{m^2}{\phi(r)^2}\right)\lambda_0(r)+\frac{r^2}{\phi(r)^2}\lambda_1(r)\right\}, \label{eq:lambda radial Dirac} \\
				\Diraclambda_\rad^* \coloneqq \sup_{r > 0} \Diraclambda_\rad(r) , \\
				\DiracE_{\rad}\coloneqq \set{ r>0 }{ \Diraclambda_\rad(r) = \Diraclambda_\rad^* }.
			\end{gather}
			Then, under Assumption \ref{assumption}, we have
			\begin{equation}
				\norm{\DiracS}_{\rad}^2 = 2 \pi \Diraclambda_\rad^* . \label{optimal_answer_5}
			\end{equation}
			
			Regarding extremisers, fix $f \in L^2_\rad(\R^d, \C^N)$ 
			and write
			\begin{equation}
				f(\xi) = r^{-(d-1)/2} \fk{0}(\xi) .
			\end{equation}
			Then the following are equivalent:
			\begin{eqenumerate}
				\eqitem \label{item:extremiser radial Dirac}
				The equality 
				\begin{equation}
					\norm{\DiracS f}_{L^2(\R^{d+1}, \C^N)}^2 = 2 \pi \Diraclambda_\rad^* \norm{f}^2_{L^2(\R^d,\C^N)}
				\end{equation}
				holds.
				\eqitem \label{item:extremiser condition radial Dirac}
				The function $\fk{0} \in L^2(\positiveR, \C^N)$ satisfies
				\begin{equation}
					\supp{ \fk{0} } \subset \DiracE_{\rad} .
				\end{equation}
			\end{eqenumerate}
			As a consequence, extremisers exist if and only if $\measure{\DiracE_{\rad}} > 0$.
		\end{theorem}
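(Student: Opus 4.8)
The plan is to reduce everything to the one-variable picture already developed for the Schrödinger case, using the spherical harmonics decomposition and the Funk--Hecke theorem (Lemma \ref{lem:Sf norm decomposition Dirac}). First I would observe that, when $f$ is radial, only the degree-$0$ spherical harmonic survives in the initial data, so the decomposition of $\norm{\DiracS f}^2$ collapses dramatically: all the exponentially-many cross terms that obstruct the general-dimension case vanish precisely because the angular integrals against $p_{d,k}$ for $k \geq 2$ contribute nothing once the data has no angular dependence beyond $k=0,1$ after expanding the matrix exponential $e^{-itA_\xi}$. The matrix exponential splits as
\begin{equation}
	e^{-itA_\xi} = \cos(t\phi(\abs{\xi})) I_N - i \frac{\sin(t\phi(\abs{\xi}))}{\phi(\abs{\xi})} A_\xi ,
\end{equation}
and since $A_\xi = \alpha\cdot\xi + m\beta$ is linear in $\xi$, the only angular frequencies present are $k=0$ (from $\cos$, and from the $m\beta$ part) and $k=1$ (from the $\alpha\cdot\xi$ part). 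Integrating over the $t$-variable as in the Schrödinger proof converts the $\cos^2$, $\sin^2$, and $\sin\cos$ terms into expressions involving $\phi'(r)$; the cross $\sin\cos$ term integrates to zero by oddness, which is what kills the interference.

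Next I would carry out the radial reduction on the spatial side: writing $f(\xi) = r^{-(d-1)/2}\fk{0}(\xi)$ and using the Funk--Hecke theorem to evaluate $\int_{\S^{d-1}} e^{i r \abs{x} \omega\cdot\theta}\,d\theta$ and $\int_{\S^{d-1}} \theta_j e^{i r\abs{x}\omega\cdot\theta}\,d\theta$ in terms of Legendre polynomials $p_{d,0}$ and $p_{d,1}$, one finds that $\norm{\DiracS f}^2$ becomes $\int_0^\infty Q(r) \abs{\fk{0}(r)}^2\,dr$ up to the constant $2\pi/(2\pi)^{d}$-type factor, where $Q(r)$ is an $N\times N$ positive semidefinite Hermitian matrix built from $\lambda_0(r)$, $\lambda_1(r)$, the mass $m$, $\phi(r)$, and the matrices $\beta$, $\alpha_j$. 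The coefficient $\frac{1}{2}\{(1 + m^2/\phi(r)^2)\lambda_0(r) + (r^2/\phi(r)^2)\lambda_1(r)\}$ should emerge as the largest eigenvalue of $Q(r)$: the $(1 + m^2/\phi^2)\lambda_0$ piece comes from combining the $\cos^2$ contribution (weight $\lambda_0$) with the $m^2/\phi^2$ part of the $\sin^2$ contribution (again $k=0$, weight $\lambda_0$), while the $r^2/\phi^2 \cdot \lambda_1$ piece is the $\alpha\cdot\xi$ part of the $\sin^2$ contribution, which carries the $k=1$ weight $\lambda_1$. The factor $\frac12$ is the $t$-integral average of $\sin^2 = \cos^2 = \frac12$ on average (made precise via the change of variables $r \mapsto \phi(r)$ and Plancherel in $t$).

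The main obstacle I anticipate is the linear-algebra computation: showing that the Hermitian matrix $Q(r)$ has \emph{constant} largest eigenvalue equal to $\Diraclambda_\rad(r)$ regardless of the particular representation $\{\alpha_j,\beta\}$, and identifying its eigenspace so that the extremiser condition becomes simply $\supp \fk{0}\subset\DiracE_\rad$ with no additional algebraic constraint on $\fk{0}$ (in contrast to the $d=1$ case, where $\eigenspace(r)$ appears). The key fact to exploit is that the anti-commutation relations $\alpha_j\alpha_k + \alpha_k\alpha_j = 2\delta_{jk}I_N$ force $Q(r)$ to be, after the angular integration symmetrises over $j$, a scalar multiple of the identity on the relevant block — essentially because $\sum_j \alpha_j (\cdots) \alpha_j$ and $\beta(\cdots)\beta$ act as multiples of the identity by Schur-type arguments, or more concretely because $\sum_{j} \xi_j^2 = r^2$ makes the angular average isotropic. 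Once $Q(r) = \Diraclambda_\rad(r) I_N$ on the image (or at least has that as top eigenvalue with the full space as eigenspace when $d \geq 2$), the proof of the norm identity and the characterisation of extremisers follows the same sup-over-$r$, support-localisation argument as in Theorem \ref{thm:radial Schrodinger}: the upper bound $\norm{\DiracS f}^2 \leq 2\pi \Diraclambda_\rad^* \norm{f}^2$ is immediate, and sharpness plus the extremiser description come from concentrating $\fk{0}$ near points of $\DiracE_\rad$. I would close by noting that the continuity of $\lambda_0,\lambda_1$ (hence of $\Diraclambda_\rad$) from Assumption \ref{assumption} guarantees $\DiracE_\rad$ is closed and that the supremum defining $\Diraclambda_\rad^*$ behaves well, exactly as in the Schrödinger case.
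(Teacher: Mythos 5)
Your proposal is correct and follows the paper's strategy, but two of the steps you anticipate as substantial are in fact immediate once Lemma~\ref{lem:Sf norm decomposition Dirac} is applied directly. Rather than expanding $e^{-itA_\xi}$ into $\cos/\sin$ and re-doing the $t$-Plancherel, the lemma already works with the eigenprojections $f_\pm = \tfrac12\bigl(I_N \pm \tfrac{1}{\phi(\abs{\xi})}A_\xi\bigr)f$ of $A_\xi$, which encode your even/odd $t$-split. For radial $f(\xi) = \abs{\xi}^{-(d-1)/2}\Pk{0}(\xi/\abs{\xi})\fk{0}(\abs{\xi})$, these give the $k=0$ coefficients $\tfrac12\bigl(I_N \pm \tfrac{m}{\phi(r)}\beta\bigr)\fk{0}(r)$ and the $k=1$ coefficients $\pm\tfrac{r}{2\sqrt{d}\,\phi(r)}\alpha_j\fk{0}(r)$, and the step you flag as the ``main obstacle'' --- showing your $Q(r)$ is a scalar multiple of $I_N$ --- reduces to the identity $\abs{\tfrac12(I_N+\tfrac{m}{\phi(r)}\beta)v}^2 + \abs{\tfrac12(I_N-\tfrac{m}{\phi(r)}\beta)v}^2 = \tfrac12\bigl(1+\tfrac{m^2}{\phi(r)^2}\bigr)\abs{v}^2$ (the $\pm$ sum cancels the $\beta$ cross term outright; no Schur-type argument is required) together with $\sum_{j=1}^d\abs{\alpha_j v}^2 = d\abs{v}^2$ from $\alpha_j^2 = I_N$. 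The absence of any eigenspace constraint in the extremiser condition --- in contrast to $d=1$ --- is simply because radial $f$ has no $k=1$ component for the $k=0$ piece to couple with, so the quadratic form is diagonal from the start rather than having a $b(r)$ off-diagonal block. This gives $\norm{\DiracS f}^2_{L^2(\R^{d+1},\C^N)} = 2\pi\int_0^\infty \Diraclambda_\rad(r)\abs{\fk{0}(r)}^2\,dr$ directly, after which the sup-localization argument for sharpness and the support characterization of extremisers go through exactly as in Theorem~\ref{thm:radial Schrodinger}, as you say.
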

		Note that combining the trivial inequality $\norm{\DiracS}_\rad \leq \norm{\DiracS}$ and Theorem \ref{thm:Dirac upper bound} implies the following:
		\begin{corollary} \label{cor:lower and upper bounds}
			Let $d \geq 2$. 
			Then, under Assumption \ref{assumption}, we have
			\begin{equation}
				2 \pi \Diraclambda_\rad^* \leq \norm{\DiracS}^2 \leq 2 \pi \lambda^*. 
			\end{equation}
		\end{corollary}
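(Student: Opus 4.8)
\emph{Plan.} The corollary is a formal consequence of results already in hand: the left-hand inequality follows from Theorem \ref{thm:radial Dirac} together with the monotonicity of operator norms under restriction to a subspace, and the right-hand inequality follows from Theorem \ref{thm:Dirac upper bound} together with the elementary dictionary between the operator $\DiracS$ and the optimal smoothing constant $\widetilde{C}_d(\w,\psi,m)$. So the ``proof'' is essentially bookkeeping of constants; I describe the two halves in turn.

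\emph{Lower bound.} Since $L^2_\rad(\R^d,\C^N)$ is a linear subspace of $L^2(\R^d,\C^N)$, the supremum of $\norm{\DiracS f}/\norm{f}$ defining $\norm{\DiracS}_\rad$ ranges over a subset of the one defining $\norm{\DiracS}$, whence $\norm{\DiracS}_\rad \le \norm{\DiracS}$. Squaring and inserting the identity $\norm{\DiracS}_\rad^2 = 2\pi\Diraclambda_\rad^*$ from Theorem \ref{thm:radial Dirac} gives $2\pi\Diraclambda_\rad^* \le \norm{\DiracS}^2$.

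\emph{Upper bound.} Arguing exactly as in the reduction carried out for $S$ in Section \ref{subsection:main results Schrodinger} — substituting $f \leftrightarrow \widehat{f}$ and applying the Plancherel theorem — the smoothing estimate \eqref{eq:smoothing Dirac} with the optimal constant $\widetilde{C}_d(\w,\psi,m)$ is equivalent to the bound $\norm{\DiracS f}^2_{L^2(\R^{d+1},\C^N)} \le (2\pi)^d \widetilde{C}_d(\w,\psi,m)\,\norm{f}^2_{L^2(\R^d,\C^N)}$, i.e. $\norm{\DiracS}^2 = (2\pi)^d \widetilde{C}_d(\w,\psi,m)$. By Theorem \ref{thm:Dirac upper bound} (with $\phi(r)=\sqrt{r^2+m^2}$) we have $\widetilde{C}_d(\w,\psi,m) \le \tfrac{1}{(2\pi)^{d-1}}\sup_{k}\sup_{r>0}\lambda_k(r)$; multiplying by $(2\pi)^d$ and recalling $\lambda^* = \sup_k\sup_{r>0}\lambda_k(r)$ yields $\norm{\DiracS}^2 \le 2\pi\lambda^*$. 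Combined with the lower bound, this is the asserted chain of inequalities.

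\emph{Main difficulty.} There is no substantive obstacle here; the only points to watch are the tracking of the powers of $2\pi$ relating $\norm{\DiracS}$ to $\widetilde{C}_d(\w,\psi,m)$, and the fact that the $\lambda_k$ (hence $\lambda^*$) entering the upper bound are built from the relativistic dispersion relation $\phi(r)=\sqrt{r^2+m^2}$ — the same ones that occur inside $\Diraclambda_\rad(r)$ in \eqref{eq:lambda radial Dirac}, so that the two bounds in the corollary are genuinely comparable.
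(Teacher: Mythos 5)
Your proposal is correct and matches the paper's argument exactly: the lower bound is the trivial inclusion $L^2_\rad \subset L^2$ combined with Theorem \ref{thm:radial Dirac}, and the upper bound is Theorem \ref{thm:Dirac upper bound} translated via Plancherel into the operator-norm language $\norm{\DiracS}^2 = (2\pi)^d\,\widetilde{C}_d(\w,\psi,m)$. You merely spell out the power-of-$2\pi$ bookkeeping that the paper leaves implicit.
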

		Using this, we obtain the following result:
	\begin{theorem} \label{thm:explicit value} \newcommand{\Diraclambdasup}{\Diraclambda^*}
	Let $d \geq 2$, $m > 0$, $1 < s < d$, and
	\begin{equation}
	 w(r) = r^{-s}, \quad \psi(r) = r^{1 - s/2} (r^2+m^2)^{-1/4} .
	\end{equation}
	Then we have
	\begin{equation}
		\norm{\DiracS}^2 = 2^{1-s} (2\pi)^{d+1} \frac{ \Gamma(s-1) \Gamma((d-s)/2) }{ ( \Gamma(s/2) )^2 \Gamma( (d + s)/2 - 1 ) } .
	\end{equation}	
\end{theorem}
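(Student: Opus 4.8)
The plan is to squeeze $\norm{\DiracS}^2$ between the two bounds of Corollary \ref{cor:lower and upper bounds}, namely $2\pi\Diraclambda_\rad^* \le \norm{\DiracS}^2 \le 2\pi\lambda^*$ with $\lambda^* = \sup_{k\in\N_0}\sup_{r>0}\lambda_k(r)$, and to prove that for the present $w$ and $\psi$ the two ends coincide; the hypotheses of Assumption \ref{assumption} are easily verified for this choice. The mechanism is scale invariance: the weight $w(r)=r^{-s}$ is homogeneous, so $F_w$ is a pure power, and $\psi$ is tuned so that the $r$-dependent prefactor in \eqref{eq:lambda_k} exactly cancels that homogeneity, forcing each $\lambda_k(r)$ to be \emph{constant in $r$}.

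First I would compute $F_w$. For $1<s<d$ the distribution $\abs{x}^{-s}$ on $\R^d$ is tempered and, in the normalisation $\hat f(\xi)=\int f(x)e^{-ix\cdot\xi}\,dx$, its Fourier transform is $c_{d,s}\abs{\xi}^{s-d}$ with $c_{d,s}=2^{d-s}\pi^{d/2}\Gamma(\tfrac{d-s}{2})/\Gamma(\tfrac{s}{2})$; hence by \eqref{w_Fourier} we have $F_w(\rho)=c_{d,s}(2\rho)^{(s-d)/2}$. Inserting this, together with $\psi(r)^2=r^{2-s}(r^2+m^2)^{-1/2}$ and $\abs{\phi'(r)}=r(r^2+m^2)^{-1/2}$, into \eqref{eq:lambda_k}, the prefactor $r^{d-1}\psi(r)^2/\abs{\phi'(r)}$ reduces to $r^{d-s}$ and cancels the $r^{s-d}$ coming out of $F_w(r^2(1-t))$. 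So $\lambda_k(r)=\lambda_k$ is independent of $r$, and $\lambda_0$ is an elementary Beta integral.

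For general $k$, I would substitute the Rodrigues formula \eqref{Legendre} and integrate by parts $k$ times in $t$: each step transfers a derivative from $(1-t^2)^{k+(d-3)/2}$ onto the power $(1-t)^{(s-d)/2}$, and all boundary terms vanish because $s>1$ and $d\ge 2$ keep the relevant exponents positive (for $d=2$ a routine $\varepsilon$-cutoff justifies the procedure). The surviving integral is again of Beta type, and collecting Gamma factors gives
\begin{equation*}
 \lambda_k=\lambda_0\,\frac{\Gamma\bigl(\tfrac{d-s}{2}+k\bigr)\Gamma\bigl(\tfrac{d+s}{2}-1\bigr)}{\Gamma\bigl(\tfrac{d-s}{2}\bigr)\Gamma\bigl(\tfrac{d+s}{2}-1+k\bigr)}>0 ,\qquad \frac{\lambda_{k+1}}{\lambda_k}=\frac{(d-s)/2+k}{(d+s)/2-1+k}<1 ,
\end{equation*}
the last inequality since $s>1$. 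Thus $(\lambda_k)_{k\ge0}$ is strictly decreasing and $\lambda^*=\lambda_0$. On the other side, setting $\theta(r)=m^2/\phi(r)^2\in(0,1)$ and using $\phi(r)^2=r^2+m^2$, the definition \eqref{eq:lambda radial Dirac} becomes $\Diraclambda_\rad(r)=\tfrac12\bigl((1+\theta(r))\lambda_0+(1-\theta(r))\lambda_1\bigr)$, which is increasing in $\theta$ because $\lambda_0>\lambda_1$; since $m>0$ makes $\theta(r)\to1$ as $r\to0^+$, we obtain $\Diraclambda_\rad^*=\lambda_0$ as well. Hence the two bounds of Corollary \ref{cor:lower and upper bounds} agree, and $\norm{\DiracS}^2=2\pi\lambda_0$.

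It remains to put $2\pi\lambda_0$ in closed form. Inserting $\abs{\S^{d-2}}=2\pi^{(d-1)/2}/\Gamma(\tfrac{d-1}{2})$ and the value of $c_{d,s}$ into the Beta-integral expression for $\lambda_0$, the factors $\Gamma(\tfrac{d-1}{2})$ cancel and the powers of $2$ and $\pi$ collect, yielding $2\pi\lambda_0=\dfrac{2^d\pi^{d+1/2}\,\Gamma(\tfrac{d-s}{2})\Gamma(\tfrac{s-1}{2})}{\Gamma(\tfrac{s}{2})\Gamma(\tfrac{d+s}{2}-1)}$; the Legendre duplication formula $\Gamma(\tfrac{s-1}{2})\Gamma(\tfrac{s}{2})=2^{2-s}\sqrt{\pi}\,\Gamma(s-1)$ then converts this to the stated value. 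I expect the main obstacle to be the general-$k$ evaluation: one must justify the integrations by parts near $t=1$, where $(1-t)^{(s-d)/2}$ need not be integrable by itself although the full integrand is, and keep careful track of signs; an alternative is to recognise $\lambda_k$ (up to the $r$-independent prefactor) as the $k$-th Funk--Hecke eigenvalue of the Riesz kernel $\abs{\omega-\omega'}^{s-d}$ on $\S^{d-1}$ and invoke its known closed form.
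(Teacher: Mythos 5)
Your argument is correct and takes essentially the same route as the paper: squeeze $\norm{\DiracS}^2$ between the two ends of Corollary \ref{cor:lower and upper bounds}, observe that for this choice of $(w,\psi)$ each $\lambda_k(r)\equiv c_k$ is constant in $r$ and strictly decreasing in $k$ (so $\lambda^*=c_0$), and note that $m>0$ forces $\Diraclambda_\rad^*=c_0$ as well by letting $r\to 0^+$. The only difference is presentational: the paper simply cites Lemma \ref{lem:BS 1.6} (quoted from Bez--Sugimoto) for the constancy, the closed form of $c_k$, and its monotonicity, whereas you re-derive those facts from the Fourier transform of the Riesz kernel and a Funk--Hecke/Beta-integral computation; your Gamma bookkeeping, including the Legendre duplication step, agrees with the stated value.
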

		\section{Preliminaries} \label{section:preliminaries}
		\subsection{Spherical harmonics decomposition of \texorpdfstring{$L^2(\R^d)$}{L2(Rd)}} \label{subsection:Spherical harmonics decomposition}
		For each $k \in \N$, let $\HP_{k}(\R^d)$ and $\{ \Pkn{k}{n} \}_{1 \leq n \leq d_k}$ be the space of homogeneous harmonic polynomials of degree $k$ on $\R^d$ and its orthonormal basis, respectively.
		Furthermore, we write $\HP(\R^d) \coloneqq \bigoplus_{k \in \N} \HP_k(\R^d)$.
		Here the inner product of $P, Q \in \HP(\R^d)$ is defined by
		\begin{equation}
			\innerproduct{P}{Q}_{\HP(\R^d)}
			\coloneqq \innerproduct{ \restr{P}{\S^{d-1}} }{ \restr{Q}{\S^{d-1}} }_{L^2(\S^{d-1})}
			= \int_{ \theta \in \S^{d-1} } P(\theta) \conjugate{Q(\theta)} \, d\sigma(\theta) ,
		\end{equation}
		as usual.
		The spherical harmonics decomposition (Theorem \ref{thm:spherical harmonics decomposition}) and the Funk--Hecke theorem (Theorem \ref{thm:Funk-Hecke}) are well-known and play essential roles in \cite{BSS} and \cite{Ikoma}.
		\begin{theorem} \label{thm:spherical harmonics decomposition}
			For any $f \in L^2(\R^d)$, there exists $\{ \fkn{k}{n} \}_{ k \in \N, 1 \leq n \leq d_k } \subset L^2(\positiveR)$ satisfying
			\begin{gather}
				f(\xi) = \abs{\xi}^{-(d-1)/2} \sum_{k=0}^{\infty} \sum_{n=1}^{d_k} \Pkn{k}{n}(\xi / \abs{\xi}) \fkn{k}{n}(\abs{\xi}) , \label{eq:spherical harmonics decomposition} \\
				\norm{f}_{L^2(\R^d)}^2 = \sum_{k=0}^{\infty} \sum_{n=1}^{d_k}  \norm{\fkn{k}{n}}_{L^2(\positiveR)}^2 . \label{eq:spherical harmonics decomposition norm}
			\end{gather}
		\end{theorem}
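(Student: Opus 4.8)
\medskip

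\noindent\emph{Proof proposal.}
The plan is to reduce the statement to the classical completeness of spherical harmonics on $\S^{d-1}$ by passing to polar coordinates. First I would introduce the map $U \colon L^2(\R^d) \to L^2(\positiveR \times \S^{d-1})$ given by $Uf(\rho, \theta) \coloneqq \rho^{(d-1)/2} f(\rho\theta)$; since $dx = \rho^{d-1}\,d\rho\,d\sigma(\theta)$ in polar coordinates, this $U$ is a unitary isomorphism, so it suffices to decompose $g \coloneqq Uf \in L^2(\positiveR \times \S^{d-1})$. For each fixed $\rho$ I would expand the slice $g(\rho, \cdot) \in L^2(\S^{d-1})$ in the orthonormal system $\{ \restr{\Pkn{k}{n}}{\S^{d-1}} \}$, and the radial profiles $\fkn{k}{n}$ are precisely these expansion coefficients, viewed as functions of $\rho$.

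The key external input is that $\{ \restr{\Pkn{k}{n}}{\S^{d-1}} \}_{k \in \N,\, 1 \le n \le d_k}$ is an orthonormal basis of $L^2(\S^{d-1})$. Orthonormality is immediate from the definition of $\innerproduct{\cdot}{\cdot}_{\HP(\R^d)}$ once one recalls that homogeneous harmonic polynomials of distinct degrees are $L^2(\S^{d-1})$-orthogonal; completeness follows from the density of polynomials in $C(\S^{d-1})$ (Stone--Weierstrass) together with the fact that each homogeneous polynomial of degree $m$ can be written as $\sum_{j \geq 0} \abs{x}^{2j} h_{m-2j}$ with $h_i$ homogeneous harmonic of degree $i$, which on $\S^{d-1}$ collapses to a finite sum of spherical harmonics. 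In the degenerate case $d = 1$ this still makes sense: $\S^0 = \{\pm 1\}$, a harmonic polynomial in one variable is affine, so $\HP_0$ is spanned by the constant $1$, $\HP_1$ by $\sgn$, and $\HP_k = \{0\}$ for $k \geq 2$; the suitably normalised pair $1, \sgn$ is an orthonormal basis of $L^2(\S^0) \cong \C^2$, and the statement holds verbatim with only $k = 0, 1$ contributing.

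With this in hand I would set, for almost every $\rho \in \positiveR$,
\begin{equation}
	\fkn{k}{n}(\rho) \coloneqq \int_{\S^{d-1}} g(\rho, \theta)\, \conjugate{\Pkn{k}{n}(\theta)} \, d\sigma(\theta) ,
\end{equation}
which is a measurable function of $\rho$ by Fubini--Tonelli (and $g(\rho,\cdot) \in L^2(\S^{d-1})$ for a.e.\ $\rho$). Parseval's identity on the sphere then gives, for a.e.\ $\rho$, both $g(\rho, \cdot) = \sum_{k,n} \fkn{k}{n}(\rho)\, \restr{\Pkn{k}{n}}{\S^{d-1}}$ in $L^2(\S^{d-1})$ and $\int_{\S^{d-1}} \abs{g(\rho, \theta)}^2 \, d\sigma(\theta) = \sum_{k,n} \abs{\fkn{k}{n}(\rho)}^2$ (writing $\sum_{k,n}$ for $\sum_{k \in \N}\sum_{n=1}^{d_k}$). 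Integrating the last identity in $\rho$ and applying Tonelli yields $\norm{f}_{L^2(\R^d)}^2 = \sum_{k,n} \norm{\fkn{k}{n}}_{L^2(\positiveR)}^2$, which simultaneously shows $\fkn{k}{n} \in L^2(\positiveR)$ and establishes \eqref{eq:spherical harmonics decomposition norm}; the same bound shows the partial sums over $k \leq K$ converge to $g$ in $L^2(\positiveR \times \S^{d-1})$, and unwinding the definition of $U$ produces \eqref{eq:spherical harmonics decomposition} with convergence in $L^2(\R^d)$. Since every ingredient is classical, I do not expect a genuine obstacle; the only points needing care are the measurability of $\rho \mapsto \fkn{k}{n}(\rho)$ and the Fubini--Tonelli bookkeeping that upgrades the fibrewise Parseval identities to the global ones.
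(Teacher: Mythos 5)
Your argument is correct: the reduction to polar coordinates via the unitary map $U$, the completeness of $\{\restr{\Pkn{k}{n}}{\S^{d-1}}\}$ in $L^2(\S^{d-1})$, and the fibrewise Parseval identity upgraded by Tonelli is exactly the standard proof of this decomposition, including your correct handling of the degenerate case $d=1$ where only $k=0,1$ contribute. The paper itself offers no proof --- it states the theorem as a well-known classical fact --- so your write-up simply supplies the classical argument the paper implicitly relies on, and I see no gap in it.
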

		Note that in the case $d = 1$, we have
		\begin{equation} \label{eq:HPk}
			\HP_k(\R) = \begin{cases}
				\set{ c }{ c \in \C } , & k = 0 , \\
				\set{ c\xi }{ c \in \C } , & k = 1 , \\
				\{ 0 \} , & k \geq 2
			\end{cases}
		\end{equation}
		and therefore \eqref{eq:spherical harmonics decomposition} and \eqref{eq:spherical harmonics decomposition norm} are reduced to
		\begin{gather}
			f(\xi) = \frac{1}{\sqrt{2}} \fk{0}(\abs{\xi}) + \frac{ \sgn(\xi) }{\sqrt{2}} \fk{1}(\abs{\xi}), \label{eq:spherical harmonics decomposition d=1} \\
			\norm{f}_{L^2(\R)}^2 = \norm{\fk{0}}_{L^2(\positiveR)}^2 + \norm{\fk{1}}_{L^2(\positiveR)}^2 . \label{eq:spherical harmonics decomposition norm d=1}
		\end{gather}
		We remark that \eqref{eq:spherical harmonics decomposition d=1} is equivalent to the even--odd decomposition
		\begin{equation}
			f(\xi) = \frac{ f(\xi) + f(-\xi) }{2} + \frac{ f(\xi) + f(-\xi) }{2} .
		\end{equation}
		\subsection{The Funk--Hecke theorem} \label{subsection:Funk-Hecke}
		The Funk--Hecke theorem is as follows:
		\begin{theorem} \label{thm:Funk-Hecke}
			Let $d \geq 2$, $k \in \N$, $F \in L^1 ( [-1, 1], (1-t^2)^{(d-3)/2} \, dt )$ and write
			\begin{equation} \label{eq:mu in Funk-Hecke}
				\mu_{k}[F] \coloneqq \measure{\S^{d-2}} \int_{-1}^1 F(t) p_{d, k}(t) 
				(1-t^2)^{\frac{d-3}{2}} \, dt, 
			\end{equation}
			here recall that $p_{d, k}$ denotes the Legendre polynomial of degree $k$ in $d$ dimensions (see \eqref{Legendre}).
			Then, for any $P \in \HP_k(\R^d)$ and $\omega \in \S^{d-1}$, we have
			\begin{equation} \label{eq:Funk-Hecke}
				\int_{\theta \in \S^{d-1}} F(\theta \cdot \omega) P(\theta) \, d\sigma(\theta) 
				= \mu_{k}[F] P(\omega) . 
			\end{equation}
		\end{theorem}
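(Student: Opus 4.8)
\emph{Proof proposal.} The plan is to exploit the rotational symmetry of the integral twice: once to normalise the point $\omega$ to the north pole $e_d$, and once more to reduce the identity to a statement on the one-dimensional space of rotation-invariant spherical harmonics of degree $k$.

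First I would reduce to the case $\omega = e_d$. For an arbitrary $\omega \in \S^{d-1}$, choosing $R \in \mathrm{SO}(d)$ with $R^{-1}\omega = e_d$ and substituting $\theta \mapsto R\theta$ (using that $d\sigma$ is rotation-invariant and that $(R\theta)\cdot\omega = \theta\cdot(R^{-1}\omega)$) turns $\int_{\S^{d-1}}F(\theta\cdot\omega)P(\theta)\,d\sigma(\theta)$ into $\int_{\S^{d-1}}F(\theta\cdot e_d)(P\circ R)(\theta)\,d\sigma(\theta)$, where $P\circ R \in \HP_k(\R^d)$; so it suffices to establish $\int_{\S^{d-1}}F(\theta\cdot e_d)Q(\theta)\,d\sigma(\theta) = \mu_k[F]Q(e_d)$ for every $Q\in\HP_k(\R^d)$, since applying this with $Q = P\circ R$ gives $\mu_k[F](P\circ R)(e_d) = \mu_k[F]P(\omega)$. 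All integrals above converge absolutely: by the standard formula for integration over the sphere in the last coordinate,
\begin{equation} \label{eq:sphere-polar}
	\int_{\S^{d-1}} g(\theta\cdot e_d)\,d\sigma(\theta) = \measure{\S^{d-2}}\int_{-1}^1 g(t)(1-t^2)^{\frac{d-3}{2}}\,dt ,
\end{equation}
the function $\theta\mapsto F(\theta\cdot e_d)$ lies in $L^1(\S^{d-1})$ by the hypothesis on $F$, while $Q$ is bounded on $\S^{d-1}$.

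Next I would identify $\HP_k(\R^d)$ with its (injective, by homogeneity) restriction $\mathcal{Y}_k$ to $\S^{d-1}$ and use the structure of $\mathcal{Y}_k$ under the stabiliser $H := \mathrm{SO}(d-1)$ of $e_d$. Here I invoke the classical fact — consistent with the Rodrigues formula \eqref{Legendre} and essentially the defining property of $p_{d,k}$ — that the space $\mathcal{Y}_k^{H}$ of $H$-invariant elements of $\mathcal{Y}_k$ is one-dimensional, spanned by the zonal harmonic $Z(\theta) := p_{d,k}(\theta\cdot e_d)$, and that $p_{d,k}(1) = 1$. Averaging over $H$ against normalised Haar measure, $P \mapsto P^\sharp := \int_H (P\circ R)\,dR$ sends $\mathcal{Y}_k$ into $\mathcal{Y}_k^H = \C Z$ and fixes $\mathcal{Y}_k^H$ pointwise; hence any linear functional $\ell$ on the finite-dimensional space $\mathcal{Y}_k$ satisfying $\ell(P\circ R) = \ell(P)$ for all $R\in H$ obeys $\ell(P) = \ell(P^\sharp)$, so that, writing $P^\sharp = t Z$, we get $\ell(P) = t\,\ell(Z)$ and $\ell$ is entirely determined by the single scalar $\ell(Z)$. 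In particular, two $H$-invariant functionals that agree at $Z$ are equal.

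Finally I would apply this with the two functionals $L(Q) := \int_{\S^{d-1}}F(\theta\cdot e_d)Q(\theta)\,d\sigma(\theta)$ and $Q \mapsto \mu_k[F]Q(e_d)$. The change of variables of the first step, now with $R\in H$ (so $Re_d = e_d$), shows $L(Q\circ R) = L(Q)$, i.e.\ $L$ is $H$-invariant; the second functional is trivially $H$-invariant. They agree at $Z$: by \eqref{eq:sphere-polar} and the definition \eqref{eq:mu in Funk-Hecke},
\begin{equation}
	L(Z) = \measure{\S^{d-2}}\int_{-1}^{1} F(t)\,p_{d,k}(t)\,(1-t^2)^{\frac{d-3}{2}}\,dt = \mu_k[F] ,
\end{equation}
while $\mu_k[F]Z(e_d) = \mu_k[F]p_{d,k}(1) = \mu_k[F]$. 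Hence $L(Q) = \mu_k[F]Q(e_d)$ for all $Q\in\mathcal{Y}_k$, and the first step upgrades this to \eqref{eq:Funk-Hecke} for general $\omega$. The only substantial ingredient is the structural fact in the second step — that the rotation-invariant degree-$k$ spherical harmonics form a line spanned by $p_{d,k}(\theta\cdot e_d)$ with $p_{d,k}(1) = 1$ — which is a statement purely about Legendre polynomials (implicit in \eqref{Legendre}) and is where the main obstacle, if any, lies. Everything else is symmetry bookkeeping, with the hypothesis $F\in L^1([-1,1],(1-t^2)^{(d-3)/2}\,dt)$ entering only through \eqref{eq:sphere-polar} to secure integrability and the interchange of $\ell$ with the Haar average.
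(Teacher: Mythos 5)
The paper itself does not prove Theorem \ref{thm:Funk-Hecke}: it is quoted as a well-known classical result (with \eqref{Legendre} as the definition of $p_{d,k}$), so there is no internal proof to compare yours against. Your argument is the standard symmetry/zonal-harmonic proof, and its skeleton is sound: the reduction to $\omega=e_d$, the slice formula, and the ``two invariant functionals agreeing on the zonal element coincide'' step are all correct as far as they go, and you correctly flag that the real content is the structural fact that the invariant degree-$k$ spherical harmonics form the line spanned by $\theta\mapsto p_{d,k}(\theta\cdot e_d)$ with $p_{d,k}(1)=1$ (which you invoke rather than derive from \eqref{Legendre}; given the paper's own treatment this is a reasonable black box, but it is where the substance lies).

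There is, however, one concrete defect as written: you take $H=\mathrm{SO}(d-1)$, the stabiliser of $e_d$ in $\mathrm{SO}(d)$, and the one-dimensionality of $\mathcal{Y}_k^{H}$ is false when $d=2$. In that case $H$ is trivial, so $\mathcal{Y}_k^{H}=\mathcal{Y}_k$ is two-dimensional for $k\geq 1$ (spanned on the circle by $\cos k\varphi$ and $\sin k\varphi$), every functional is $H$-invariant, and your step ``$\ell$ is determined by $\ell(Z)$'' collapses — yet $d=2$ is included in the statement and is precisely the case the paper needs for Theorem \ref{thm:2D Dirac}. The repair is routine: average instead over the full stabiliser $\mathrm{O}(d-1)\subset \mathrm{O}(d)$ of $e_d$ (reflections included); $d\sigma$ is invariant under all of $\mathrm{O}(d)$ and $P\circ R\in\HP_k(\R^d)$ for $R\in\mathrm{O}(d)$, so both your reduction step and the averaging argument go through unchanged, and the $\mathrm{O}(d-1)$-invariant subspace of $\mathcal{Y}_k$ is genuinely the line $\C\, p_{d,k}(\theta\cdot e_d)$ for every $d\geq 2$ (for $d=2$ the extra reflection enforces evenness in the latitude angle and picks out $p_{2,k}(t)=\cos(k\arccos t)$). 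With that adjustment, and a proof or precise citation for the zonal fact including the normalisation $p_{d,k}(1)=1$ under \eqref{Legendre}, your proof is complete.
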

		Note that the function $\lambda_k$ defined in \eqref{eq:lambda_k} satisfies
		\begin{equation}
			\lambda_{k}(r) 
			= \frac{r^{d-1}\psi(r)^2}{\abs{\phi^{\prime}(r)}} \mu_{k}[F_\w( r^2 (1 - \variabledot) ) ] , 
		\end{equation}
		in other words, it satisfies 
		\begin{equation}
			\frac{r^{d-1}\psi(r)^2}{\abs{\phi^{\prime}(r)}} \int_{\theta \in \S^{d-1}} F_\w( r^2(1 - \theta \cdot \omega) ) P(\theta) \, d\sigma(\theta) 
			= \lambda_k(r) P(\omega) 
		\end{equation}
		for each $k \in \Z_{\geq 0}$, $P \in \HP_k(\R^d)$ and $\omega \in \S^{d-1}$.
		
		Now a little concern is, the factor $\measure{ \S^{d-2} }$ in \eqref{eq:mu in Funk-Hecke}, the surface area of the $(d-2)$-dimensional unit sphere, does not make sense in the case $d = 1$.
		Nevertheless, it is quite easy to see that the following one-dimensional variant of Theorem \ref{thm:Funk-Hecke} holds:
		\begin{theorem} \label{thm:Funk-Hecke 1D}
			Let $k \in \N$, $F \colon \S^{0} \to \C$ and write
			\begin{equation} \label{eq:mu in Func-Hecke 1D}
				\mu_{k}[F] \coloneqq 
				\begin{cases}
					F(1) + F(-1) , & k = 0 , \\
					F(1) - F(-1) , & k = 1 , \\
					0 , & k \geq 2 .
				\end{cases}
			\end{equation}
			Then, for any $P \in \HP_k(\R)$ and $\omega \in \S^{0}$, we have
			\begin{equation} \tag{\ref{eq:Funk-Hecke}} \noeqref{eq:Funk-Hecke}
				\int_{\theta \in \S^{0}} F(\theta \omega) P(\theta) \, d\sigma(\theta) 
				= \mu_{k}[F] P(\omega) . 
			\end{equation}
		\end{theorem}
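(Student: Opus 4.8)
The plan is to verify \eqref{eq:Funk-Hecke} directly, by running through the (very short) list of possibilities for $P$ recorded in \eqref{eq:HPk} and using the only sensible interpretation of the integral over $\S^{0} = \{+1, -1\}$, namely integration against counting measure: $\int_{\theta \in \S^{0}} g(\theta) \, d\sigma(\theta) = g(1) + g(-1)$ for every $g \colon \S^{0} \to \C$. With this convention, both sides of \eqref{eq:Funk-Hecke} are explicit finite expressions in $F(1)$, $F(-1)$ and the (single) coefficient of $P$, so the identity reduces to an elementary comparison done case by case on $k$.

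First I would dispose of $k \geq 2$: here $\HP_k(\R) = \{0\}$ by \eqref{eq:HPk}, so $P \equiv 0$ and both sides of \eqref{eq:Funk-Hecke} vanish. Next, for $k = 0$ I would write $P \equiv c$ with $c \in \C$, so that the left-hand side of \eqref{eq:Funk-Hecke} is $c\bigl(F(\omega) + F(-\omega)\bigr)$; since $\omega \in \{+1, -1\}$ forces $\{\omega, -\omega\} = \{+1, -1\}$, this equals $c\bigl(F(1) + F(-1)\bigr) = \mu_0[F]\, P(\omega)$. Finally, for $k = 1$ I would write $P(\theta) = c\theta$, so the left-hand side of \eqref{eq:Funk-Hecke} becomes $c\bigl(F(\omega)\cdot 1 + F(-\omega)\cdot(-1)\bigr) = c\bigl(F(\omega) - F(-\omega)\bigr)$; checking the values $\omega = 1$ and $\omega = -1$ separately, one gets $c\bigl(F(1) - F(-1)\bigr) = \mu_1[F]\,P(1)$ and $c\bigl(F(-1) - F(1)\bigr) = -c\bigl(F(1) - F(-1)\bigr) = \mu_1[F]\,P(-1)$ respectively, so in both cases the left-hand side equals $\mu_1[F]\,P(\omega)$. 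This exhausts all cases.

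I expect no genuine obstacle. The only place that calls for a moment's attention is the case $k = 1$ with $\omega = -1$, where one must notice that the antisymmetry of the test polynomial $P$ cancels the antisymmetry of $\mu_1[F]$ under the exchange $F(1) \leftrightarrow F(-1)$, leaving the product $\mu_1[F]\,P(\omega)$ unchanged. This is simply the one-dimensional shadow of the classical Funk--Hecke theorem (Theorem \ref{thm:Funk-Hecke}): $\mu_0[F]$ and $\mu_1[F]$ play the roles of the ``moments'' of $F$ against the degree-$0$ and degree-$1$ monomials restricted to $\{\pm 1\}$, which are the $d = 1$ analogues of $p_{d,0} \equiv 1$ and $p_{d,1}(t) = t$, while for $k \geq 2$ there is no nonzero harmonic polynomial on $\R$ to test against. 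Since the Rodrigues formula \eqref{Legendre} degenerates when $d = 1$ (the factor $\Gamma(\tfrac{d-1}{2})$ is singular there), I would simply take \eqref{eq:mu in Func-Hecke 1D} as the definition of $\mu_k[F]$ and regard the computation above as the proof that \eqref{eq:Funk-Hecke} persists in this degenerate case.
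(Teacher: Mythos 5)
Your verification is correct, and it is exactly the elementary case-by-case check the paper has in mind: the authors state this one-dimensional variant without proof ("it is quite easy to see"), and your interpretation of $d\sigma$ on $\S^{0}=\{\pm 1\}$ as counting measure is consistent with how the paper uses $\mu_k$ in the $d=1$ setting (e.g.\ $\mu_0[F_\w(r^2(1-\variabledot))]=\norm{\w}_{L^1(\R)}+F_\w(2r^2)$). Nothing is missing; the sign check at $k=1$, $\omega=-1$ is indeed the only point requiring care.
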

		Notice that $\lambda_k$ defined in \eqref{eq:lambda_k 1D} satisfies
		\begin{equation}
			\lambda_k(r) 
			=
			\frac{\psi(r)^2}{\abs{\phi^{\prime}(r)}} \mu_k [ F_\w ( r^2 (1 - \variabledot) ) ] .
		\end{equation}
		\subsection{Decomposition lemmas of \texorpdfstring{$\norm{S f}_{L^2(\R^d)}$}{||Sf||} and \texorpdfstring{$\norm{\DiracS f}_{L^2(\R^d, \C^N)}$}{||S~f||}}
		Using results in Section \ref{subsection:Spherical harmonics decomposition} and Section \ref{subsection:Funk-Hecke}, 
		we can obtain following important lemmas.
		\begin{lemma} \label{lem:Sf norm decomposition Schrodinger}
			Suppose that Assumption \ref{assumption} holds.
			Let $f \in L^2(\R^d)$. 
			If 
			\begin{equation}
				f(\xi) = \abs{\xi}^{-(d-1)/2} \sum_{k=0}^{\infty} \sum_{n=1}^{d_k} \Pkn{k}{n}(\xi / \abs{\xi}) \fkn{k}{n}(\abs{\xi}) ,
			\end{equation}
			then we have
			\begin{equation}
				\norm{Sf}_{L^2(\R^{d+1})} = 2 \pi \sum_{k = 0}^{\infty} \int_{0}^{\infty} \lambda_k (r) \mleft( \sum_{n=1}^{d_k} \abs{\fkn{k}{n}(r)}^2 \mright) \, dr .
			\end{equation}
		\end{lemma}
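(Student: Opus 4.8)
The plan is to follow the Kato--Bez--Saito--Sugimoto strategy: carry out the $t$- and $x$-integrations explicitly so as to reduce $\norm{Sf}_{L^2(\R^{d+1})}^2$ to a single radial integral of an angular quadratic form, and then diagonalise that form by expanding $f$ into spherical harmonics and invoking the Funk--Hecke theorem.

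\emph{Step 1: integrating in $t$ and in $x$.} Fix $x\in\R^d$ and view $t\mapsto\w(\abs{x})^{-1/2}Sf(x,t)$ as a function of $t$ alone. Writing $\xi=r\theta$ ($r>0$, $\theta\in\S^{d-1}$) and substituting $s=\phi(r)$ --- legitimate because $\phi$ is $C^1$ and injective by Assumption \ref{assumption} --- exhibits this function as the Fourier transform in $s$ of an explicit radial--angular profile, so Plancherel in $t$ followed by the inverse substitution $s=\phi(r)$ yields
\[
	\int_{\R}\abs{Sf(x,t)}^2\,dt
	=2\pi\,\w(\abs{x})\int_0^\infty\frac{r^{2(d-1)}\psi(r)^2}{\abs{\phi'(r)}}\Abs{\int_{\theta\in\S^{d-1}}e^{irx\cdot\theta}f(r\theta)\,d\sigma(\theta)}^2dr.
\]
Integrating this in $x$, applying Fubini, expanding the modulus square as a double integral over $\theta,\omega\in\S^{d-1}$, and using \eqref{w_Fourier} together with $\abs{\theta-\omega}^2=2-2\,\theta\cdot\omega$ to compute $\int_{\R^d}\w(\abs{x})e^{irx\cdot(\theta-\omega)}\,dx=F_\w\bigl(r^2(1-\theta\cdot\omega)\bigr)$, we arrive at
\[
	\norm{Sf}_{L^2(\R^{d+1})}^2
	=2\pi\int_0^\infty\frac{r^{2(d-1)}\psi(r)^2}{\abs{\phi'(r)}}\int_{\S^{d-1}}\int_{\S^{d-1}}F_\w\bigl(r^2(1-\theta\cdot\omega)\bigr)f(r\theta)\conjugate{f(r\omega)}\,d\sigma(\theta)\,d\sigma(\omega)\,dr.
\]

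\emph{Step 2: diagonalising the angular form.} On each sphere $\abs{\xi}=r$ insert $f(r\theta)=r^{-(d-1)/2}\sum_{k,n}\Pkn{k}{n}(\theta)\fkn{k}{n}(r)$, valid for a.e.\ $r$ by Theorem \ref{thm:spherical harmonics decomposition}. For fixed $\omega$, Theorem \ref{thm:Funk-Hecke} applied with $F(t)=F_\w(r^2(1-t))$ gives $\int_{\S^{d-1}}F_\w(r^2(1-\theta\cdot\omega))\Pkn{k}{n}(\theta)\,d\sigma(\theta)=\mu_k[F_\w(r^2(1-\variabledot))]\,\Pkn{k}{n}(\omega)$; orthonormality of $\{\Pkn{k}{n}\}$ in $L^2(\S^{d-1})$ then annihilates every cross term, leaving $r^{-(d-1)}\sum_{k,n}\abs{\fkn{k}{n}(r)}^2\,\mu_k[F_\w(r^2(1-\variabledot))]$ for the double angular integral. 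Since $\tfrac{r^{d-1}\psi(r)^2}{\abs{\phi'(r)}}\mu_k[F_\w(r^2(1-\variabledot))]=\lambda_k(r)$, the powers $r^{2(d-1)}$ and $r^{-(d-1)}$ combine and the asserted formula follows once $\sum_{k,n}$ is interchanged with $\int_0^\infty$.

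\emph{The main obstacle} is the analytic bookkeeping rather than any new idea: Fubini in the $x$-variable and the interchange of the double sum with the $r$-integral are not immediately licit, since $\widehat{\w(\abs{\,\cdot\,})}$ is only a locally bounded function on $\R^d\setminus\{0\}$ and $\lambda_k$ need not be sign-definite. I would first prove the identity for $f$ with a finite spherical-harmonic expansion whose radial profiles $\fkn{k}{n}$ are compactly supported in $(0,\infty)$ --- for such $f$ every integral above is absolutely convergent and Theorem \ref{thm:Funk-Hecke} applies verbatim --- and then pass to general $f\in L^2(\R^d)$ by density, using the boundedness of $S$ (finiteness of the smoothing constant, part of the standing hypotheses) together with the $(k,n)$-orthogonality just established. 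The one-dimensional case is handled identically, with Theorem \ref{thm:Funk-Hecke 1D} and \eqref{eq:spherical harmonics decomposition d=1} replacing their $d\geq2$ analogues.
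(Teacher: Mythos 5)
Your proposal is correct and follows essentially the same argument as the proof in \cite[Proof of Theorem 1.1]{BSS}, which is exactly what the paper cites for this omitted proof: Plancherel in $t$ after the substitution $s=\phi(r)$, Fourier inversion of the weight in $x$ to produce the kernel $F_{\w}(r^2(1-\theta\cdot\omega))$, and then the Funk--Hecke theorem together with orthonormality of the $\Pkn{k}{n}$ to kill all cross terms. Two small remarks: the left-hand side of the lemma as printed should read $\norm{Sf}_{L^2(\R^{d+1})}^2$ (a typo, which your computation correctly supplies), and the positivity needed for your density/monotone-convergence step does in fact hold, since $\lambda_k(r)$ is, up to a positive factor, an eigenvalue of the integral operator on $\S^{d-1}$ with positive-semidefinite kernel $F_{\w}(r^2(1-\theta\cdot\omega))$ (the restriction of the Fourier transform of the positive measure $\w\,dx$ to the sphere of radius $r$), so $\lambda_k\ge 0$.
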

		\begin{lemma} \label{lem:Sf norm decomposition Dirac}
			Suppose that Assumption \ref{assumption} holds.
			Let $f \in L^2(\R^d, \C^N)$ and define $f \pm \in L^2(\R^d, \C^N)$ by
			\begin{equation}
				f_\pm(\xi) \coloneqq \frac{1}{2} \mleft( f(\xi) \pm \frac{1}{\phi(r)} \bigg( m \beta f(\xi) + \sum_{j=1}^{d} \alpha_j \xi_j f(\xi) \bigg) \mright) .
			\end{equation}
			If 
			\begin{equation}
				f_\pm(\xi) = \abs{\xi}^{-(d-1)/2} \sum_{k=0}^{\infty} \sum_{n=1}^{d_k} \Pkn{k}{n}(\xi / \abs{\xi}) \fknpm{k}{n}(\abs{\xi}) ,
			\end{equation}
			then we have
			\begin{equation}
				\norm{\DiracS f}_{L^2(\R^{d+1}, \C^N)} = 2 \pi \sum_{k = 0}^{\infty} \int_{0}^{\infty} \lambda_k (r) \mleft( \sum_{n=1}^{d_k} ( \abs{\fknp{k}{n}(r)}^2 + \abs{\fknm{k}{n}(r)}^2 ) \mright) \, dr .
			\end{equation}
		\end{lemma}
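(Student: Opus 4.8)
The plan is to diagonalise the Dirac symbol $A_\xi$, split $\DiracS f$ into the two resulting half-pieces, check that the cross term between them vanishes, and then reduce each half to the Schr\"odinger-type computation behind Lemma~\ref{lem:Sf norm decomposition Schrodinger}. The algebraic fact underlying everything is that, since $\alpha_1, \dots, \alpha_d, \beta$ are Hermitian and satisfy $\alpha_j \alpha_k + \alpha_k \alpha_j = 2\delta_{jk} I_N$, squaring $A_\xi = \sum_{j} \alpha_j \xi_j + m\beta$ and collecting anticommutators gives
\begin{equation}
	A_\xi^2 = \mleft( \abs{\xi}^2 + m^2 \mright) I_N = \phi(\abs{\xi})^2 I_N .
\end{equation}
Hence $A_\xi$ is Hermitian with eigenvalues $\pm\phi(\abs{\xi})$; the orthogonal projections onto the two eigenspaces are $P_\pm(\xi) \coloneqq \tfrac{1}{2} \mleft( I_N \pm \phi(\abs{\xi})^{-1} A_\xi \mright)$, and the functions in the statement are precisely $f_\pm(\xi) = P_\pm(\xi) f(\xi)$. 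Consequently $e^{-itA_\xi} f(\xi) = e^{-it\phi(\abs{\xi})} f_+(\xi) + e^{it\phi(\abs{\xi})} f_-(\xi)$ pointwise, and $\Innerproduct{ f_+(\xi) }{ f_-(\xi) }_{\C^N} = 0$ for a.e.\ $\xi$ since $P_+ P_- = 0$ and $P_\pm^* = P_\pm$. Accordingly I would write $\DiracS f = u_+ + u_-$ with
\begin{equation}
	u_\pm(x,t) \coloneqq \w(\abs{x})^{1/2} \int_{\R^d} e^{ix\cdot\xi} \psi(\abs{\xi}) e^{\mp it\phi(\abs{\xi})} f_\pm(\xi) \, d\xi .
\end{equation}

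Next I would expand $\Norm[L^2(\R^{d+1}, \C^N)]{\DiracS f}^2 = \norm{u_+}^2 + \norm{u_-}^2 + 2\Re \Innerproduct{u_+}{u_-}$ and carry out the $t$-integration. Heuristically, the two diagonal terms each produce a factor $2\pi\delta\mleft( \phi(\abs{\xi}) - \phi(\abs{\eta}) \mright)$, which --- $\phi$ being injective --- collapses the second frequency integral onto the sphere $\abs{\eta} = \abs{\xi}$, whereas the cross term produces $2\pi\delta\mleft( \phi(\abs{\xi}) + \phi(\abs{\eta}) \mright)$, which vanishes identically because $\phi(r) = \sqrt{r^2 + m^2} > 0$ for $r > 0$. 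Rigorously, one avoids the $\delta$-functions --- exactly as in \cite{BSS} --- by the substitution $s = \phi(\abs{\xi})$, a $C^1$ increasing bijection of $(0,\infty)$ onto $(m,\infty)$ with $dr = ds/\phi^{\prime}(r)$: for fixed $x$, $u_+(x,\variabledot)$ and $u_-(x,\variabledot)$ then become $\w(\abs{x})^{1/2}$ times the Fourier transforms (in the $s \mapsto t$ variable) of two functions whose frequency supports, $\{ s > m \}$ and $\{ s < -m \}$, are disjoint, so Plancherel's theorem in $t$ gives $\int_{\R} \abs{\DiracS f(x,t)}^2 \, dt = \norm{u_+(x,\variabledot)}_{L^2_t}^2 + \norm{u_-(x,\variabledot)}_{L^2_t}^2$ with no surviving cross term. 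This vanishing of the cross term is the one feature genuinely new compared with the Schr\"odinger-type case.

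For each diagonal term I would then undo the substitution, integrate in $x$ using
\begin{equation}
	\int_{\R^d} \w(\abs{x}) e^{irx\cdot(\theta - \omega)} \, dx = \widehat{ \w(\abs{\,\cdot\,}) }\mleft( r(\omega - \theta) \mright) = F_\w\mleft( r^2(1 - \theta\cdot\omega) \mright) ,
\end{equation}
insert the spherical-harmonics expansions $f_\pm(r\theta) = r^{-(d-1)/2} \sum_{k,n} \Pkn{k}{n}(\theta) \fknpm{k}{n}(r)$, and apply the Funk--Hecke theorem (Theorem~\ref{thm:Funk-Hecke}, or Theorem~\ref{thm:Funk-Hecke 1D} when $d = 1$) together with the orthonormality of $\{\Pkn{k}{n}\}$ on $\S^{d-1}$. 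This is essentially the computation behind Lemma~\ref{lem:Sf norm decomposition Schrodinger}, performed once for $f_+$ and once for $f_-$; recalling $\lambda_k(r) = r^{d-1}\psi(r)^2 \abs{\phi^{\prime}(r)}^{-1} \mu_k\mleft[ F_\w\mleft( r^2(1 - \variabledot) \mright) \mright]$ and $\phi^{\prime} > 0$, and exchanging the sum with the integral by Tonelli's theorem, it collapses to
\begin{equation}
	\Norm[L^2(\R^{d+1}, \C^N)]{\DiracS f}^2 = 2\pi \sum_{k=0}^{\infty} \int_{0}^{\infty} \lambda_k(r) \mleft( \sum_{n=1}^{d_k} \mleft( \abs{\fknp{k}{n}(r)}^2 + \abs{\fknm{k}{n}(r)}^2 \mright) \mright) \, dr ,
\end{equation}
which is the assertion.

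The only real work is analytical bookkeeping --- verifying that the functions appearing above lie in $L^2(\R_s)$ for a.e.\ $x$ and that Fubini--Tonelli applies when integrating in $x$, all under Assumption~\ref{assumption} on $\w$ and $F_\w$ --- and this is handled exactly as in \cite{BSS}. The essential new ingredient, and the step that really distinguishes the Dirac case from the Schr\"odinger-type one, is the diagonalisation $A_\xi^2 = \phi(\abs{\xi})^2 I_N$ with its attendant projection decomposition, which both identifies the $f_\pm$ of the statement and forces the cross term to vanish.
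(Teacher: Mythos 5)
Your proposal is correct and follows essentially the same route as the proof the paper points to (\cite[Proof of Theorem 2.1]{Ikoma}, building on \cite[Proof of Theorem 1.1]{BSS}): diagonalise $A_\xi$ via the spectral projections $P_\pm(\xi)=\tfrac12\bigl(I_N\pm\phi(\abs{\xi})^{-1}A_\xi\bigr)$, so that $e^{-itA_\xi}f=e^{-it\phi}f_++e^{it\phi}f_-$, kill the cross term by the disjointness of the temporal frequency supports $\{s\geq m\}$ and $\{s\leq -m\}$, and treat each half exactly as in the Schr\"odinger-type computation with the Funk--Hecke theorem. No gaps beyond the routine Fubini/Plancherel justifications you already flag, which are handled as in \cite{BSS}.
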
	
		We omit the proofs of these lemmas. See \cite[Proof of Theorem 1.1]{BSS} and \cite[Proof of Theorem 2.1]{Ikoma}.
		
		\section{Proof of Main results} \label{section:proof}
		\subsection{Proof of Theorems \ref{thm:1D Schrodinger} and \ref{thm:1D Dirac}: smoothing in the case \texorpdfstring{$d = 1$}{d = 1}} \label{subsection:proof 1D}
		In this section, we prove Theorems \ref{thm:1D Schrodinger} and \ref{thm:1D Dirac}.
		Through out this section, we choose orthonormal bases $\{ \Pk{k} \} \subset \HP_k(\R)$ ($k = 0, 1$) given by
		\begin{equation}
			\Pk{0}(\xi) \coloneqq \frac{1}{\sqrt{2}} , \quad 
			\Pk{1}(\xi) \coloneqq \frac{\xi}{\sqrt{2}} .
		\end{equation}
		
		At first we prove Theorem \ref{thm:1D Schrodinger}. 
		Though the proof is essentially the same as that of \cite[Theorem 1.1]{BSS}, 
		we give details here so that readers can compare it with the proof of Theorem \ref{thm:1D Dirac}. 
		\begin{proof}[Proof of Theorem \ref{thm:1D Schrodinger}]
			Fix $f \in L^2(\R)$ and decompose it as
			\begin{equation}
				f(\xi) = \sum_{k=0, 1} \Pk{k}(\xi / \abs{\xi}) \fk{k}(\abs{\xi}) .
			\end{equation}
			Then Lemma \ref{lem:Sf norm decomposition Schrodinger} implies that
			\begin{align}
				&\norm{Sf}_{L^2(\R^2)}^2 \\
				&= 2 \pi \sum_{k = 0, 1} \int_{0}^{\infty} \lambda_k(r) \abs{ \fk{k}(r) }^2 \, dr
				\\
				&\leq 2 \pi \lambda^* \norm{f}_{L^2(\R)}^2 ,
			\end{align}
			where
			\begin{equation}
				\lambda^* \coloneqq \sup_{r > 0} \max_{k = 0, 1} \lambda_k(r) .
			\end{equation}
			Hence, we have 
			\begin{equation}
				\norm{S}^2 \leq 2 \pi \lambda^* .
			\end{equation}
			
			To see the equality $\norm{S}^2 = 2 \pi \lambda^*$, 
			we will show that for any $\varepsilon > 0$, 
			there exists $f \in L^2(\R) \setminus \{0\}$ 
			such that $\norm{Sf}^{2}_{L^2(\R^2)} \geq 2 \pi ( \lambda^* - \varepsilon ) \norm{f}_{L^2(\R)}^2$.
			Fix $\varepsilon > 0$. 
			Then, by the definition of $\lambda^*$ and the continuity of $\lambda_k$, 
			we can choose $k \in \{0, 1\}$ such that the Lebesgue measure of
			\begin{equation}
				E_k(\varepsilon) \coloneqq \set{r > 0}{ \lambda_k(r) \geq \lambda^* - \varepsilon }	
			\end{equation}
			is nonzero (possibly infinite). 
			Now let $f \in L^2(\R) \setminus \{ 0 \}$ be
			\begin{equation}
				f(\xi) = \sum_{k=0, 1} \Pk{k}( \xi / \abs{\xi} ) \fk{k}(\abs{\xi})
			\end{equation}
			with $\{ \fk{k} \}_{k = 0, 1} \subset L^2(\positiveR)$ satisfying
			\begin{equation}
				\supp \fk{k}(r) \subset E_k(\varepsilon) .
			\end{equation} 
			Then we have
			\begin{align}
				\norm{Sf}^{2}_{L^2(\R^2)}
				&= 2 \pi \int_{0}^{\infty} \lambda_{k}(r) \abs{ \fk{k}(\abs{\xi}) }^2 \, dr \\
				&\geq 2 \pi \int_{0}^{\infty} ( \lambda^* - \varepsilon ) \abs{ \fk{k}(\abs{\xi}) }^2 \, dr \\
				&= 2 \pi ( \lambda^* - \varepsilon ) \norm{f}_{L^2(\R)}^2 ,
			\end{align}
			hence the equality $\norm{S}^2 = 2 \pi \lambda^*$ holds.
			
			By a similar argument, we can show that $\eqref{item:extremiser condition 1D Schrodinger} \implies \eqref{item:extremiser 1D Schrodinger}$.
			Let $f \in L^2(\R)$ be
			\begin{equation}
				f(\xi) = \sum_{k=0, 1} \Pk{k}( \xi / \abs{\xi} ) \fk{k}(\abs{\xi})
			\end{equation}
			and $\{ \fk{k} \}_{k = 0, 1} \subset L^2(\positiveR)$ satisfy
			\begin{equation}
				\supp \fk{k}(r) \subset E_k .
			\end{equation} 
			Then we have
			\begin{align}
				\norm{Sf}_{L^2(\R^2)}^2 
				&= 2 \pi \sum_{k = 0, 1} \int_{0}^{\infty} \lambda_k(r) \abs{ \fk{k}(r) }^2 \, dr
				\\
				&= 2 \pi \sum_{k = 0, 1} \int_{0}^{\infty} \lambda^* \abs{ \fk{k}(r) }^2 \, dr \\
				&= 2 \pi \lambda^* \norm{f}_{L^2(\R)}^2.
			\end{align}
			
			Finally, we show that $\eqref{item:extremiser 1D Schrodinger} \implies \eqref{item:extremiser condition 1D Schrodinger}$.
			Suppose that the equality $\norm{Sf}_{L^2(\R^2)}^2 = 2 \pi \lambda^* \norm{f}_{L^2(\R)}^2$ holds. 
			Then, since
			\begin{equation}
				2\pi \int_{0}^{\infty} \sum_{k=0, 1} (\lambda^*-\lambda_k(r)) \abs{\fk{k}(r)}^2 \, dr = 2 \pi \lambda^* \norm{f}_{L^2(\R)}^2 - \norm{Sf}_{L^2(\R^2)}^2 = 0
			\end{equation}
			and 
			\begin{equation}
				\sum_{k=0, 1} ( \lambda^* - \lambda_{k}(r) ) \abs{\fk{k}(r)}^2 \geq 0 , 
			\end{equation}
			we obtain
			\begin{equation}
				\sum_{k=0, 1} ( \lambda^* - \lambda_{k}(r) ) \abs{\fk{k}(r)}^2 = 0 
			\end{equation}
			for almost every $r > 0$.
			On the other hand, for each $k = 0, 1$, the definition of $E_k$ implies that
			\begin{equation}
				\lambda^* - \lambda_k(r) > 0 
			\end{equation}
			for any $r \in \positiveR \setminus E_k$.
			Therefore, we conclude that
			\begin{equation}
				\abs{\fk{k}(r)}^2 = 0
			\end{equation}
			holds for almost every $r \in \positiveR \setminus E_k$. 
			\end{proof}
		Now we are going to prove Theorem \ref{thm:1D Dirac}, the optimal smoothing estimate for the one dimensional Dirac equation. 
		You may find that the proof is much more complicated than the previous one for Schr\"odinger equations, even in the case $d = 1$. 
		\begin{proof}[Proof of Theorem \ref{thm:1D Dirac}]
			Fix $f \in L^2(\R, \C^2)$ and decompose it as
			\begin{equation}
				f(\xi) = \sum_{k=0, 1} \Pk{k}(\xi / \abs{\xi}) \fk{k}(\abs{\xi}) .
			\end{equation}
			Then, since we have
			\begin{align}
				\fpm(\xi) 
				&= \frac{1}{2} \sum_{k = 0, 1} 
				\mleft( 
				\Pk{k}(\xi / \abs{\xi}) \fk{k}(\abs{\xi}) 
				\pm \frac{1}{\phi(\abs{\xi})} ( m \beta \Pk{k}(\xi / \abs{\xi}) \fk{k}(\abs{\xi}) + \alpha \xi \Pk{k}(\xi / \abs{\xi}) \fk{k}(\abs{\xi}) )
				\mright) \\
				&= \frac{1}{2} \sum_{k = 0, 1} \Pk{k}(\xi / \abs{\xi})
				\mleft( 
				\fk{k}(\abs{\xi}) 
				\pm \frac{1}{\phi(\abs{\xi})} ( m \beta \fk{k}(\abs{\xi}) + \alpha \abs{\xi} \fk{1-k}(\abs{\xi}) )
				\mright) ,
			\end{align}
			Lemma \ref{lem:Sf norm decomposition Dirac} implies that
			\begin{equation} \label{eq:Sf norm by fkp fkm}
				\norm{\DiracS f}_{L^2(\R^2, \C^2)}^2 
				= 2 \pi \sum_{k=0, 1} \int_{0}^{\infty} \frac{1}{2} \lambda_k(r) \mleft( \abs{ \fk{k}(r) }^2 + \frac{1}{\phi(r)^2} \abs{ m \beta \fk{k}(r) + \alpha r \fk{1-k}(r) }^2 \mright) \, dr .
			\end{equation}
			Moreover, using $\alpha^2 = \beta^2 = I_2$, $\adjoint{\alpha} = \alpha$, $\adjoint{\beta} = \beta$ and $\alpha \beta = - \beta \alpha$, 
			the integrand is written as follows:
			\begingroup
			\allowdisplaybreaks
			\begin{align}
				&\quad \frac{1}{2} \sum_{k=0, 1} \lambda_k(r) \mleft( \abs{ \fk{k}(r) }^2 + \frac{1}{\phi(r)^2} \abs{ m \beta \fk{k}(r) + \alpha r \fk{1-k}(r) }^2 \mright)  \\
				&= \frac{1}{2} \sum_{k=0, 1} \lambda_k(r) 
				\mleft( \mleft( 1 + \frac{m^2}{\phi(r)^2} \mright) \abs{ \fk{k}(r) }^2 + \frac{2 m r}{\phi(r)^2} \Re{ \innerproduct{ \beta \fk{k}(r) }{ \alpha \fk{1-k}(r) }_{\C^2} + \frac{r^2}{\phi(r)^2} \abs{ \fk{1-k}(r) }^2 } \mright) \\
				&= \frac{1}{2} \mleft( \mleft( 1 + \frac{m^2}{\phi(r)^2} \mright) \lambda_0(r) + \frac{r^2}{\phi(r)^2} \lambda_{1}(r) \mright) \abs{ \beta \fk{0}(r) }^2 \\
				&+ \frac{mr}{\phi(r)^2} ( \lambda_0(r) - \lambda_1(r) ) ( \innerproduct{ \beta \fk{0}(r) }{ \alpha \fk{1}(r) }_{\C^2} + \innerproduct{ \alpha \fk{1}(r) }{ \beta \fk{0}(r) }_{\C^2} ) \\
				&+ \frac{1}{2} \mleft( \mleft( 1 + \frac{m^2}{\phi(r)^2} \mright) \lambda_1(r) + \frac{r^2}{\phi(r)^2} \lambda_{0}(r) \mright) \abs{ \alpha \fk{1}(r) }^2 \\
				&= \Innerproduct{ 
					\Quadmatrix(r) 
					\begin{pmatrix}
						\beta \fk{0}(r) \\
						\alpha \fk{1}(r)
					\end{pmatrix}
				}
				{
					\begin{pmatrix}
						\beta \fk{0}(r) \\
						\alpha \fk{1}(r)
					\end{pmatrix}
				}_{\C^4} ,
			\end{align}%
			\endgroup
			where		
			\begin{gather}
				\Quadmatrix(r) \coloneqq
				\begin{pmatrix}
					a(r) I_2 & b(r) I_2 / 2 \\
					b(r) I_2 / 2 & c(r) I_2
				\end{pmatrix} , 
				\\
				a(r) \coloneqq \frac{1}{2} \mleft( \mleft( 1 + \frac{m^2}{\phi(r)^2} \mright) \lambda_0(r) + \frac{r^2}{\phi(r)^2} \lambda_{1}(r) \mright) , 
				\\
				b(r) \coloneqq \frac{mr}{\phi(r)^2} ( \lambda_0(r) - \lambda_1(r) ) ,
				\\
				c(r) \coloneqq \frac{1}{2} \mleft( \mleft( 1 + \frac{m^2}{\phi(r)^2} \mright) \lambda_1(r) + \frac{r^2}{\phi(r)^2} \lambda_{0}(r) \mright) .
			\end{gather}
			Now we need to determine the maximal eigenvalue of $\Quadmatrix(r)$ and its associated eigenspace. 
			Note that $\phi(r) = \sqrt{r^2 + m^2}$ implies that
			\begin{align}
				\Quadmatrix(r) 
				&= \frac{1}{2} ( \lambda_0(r) + \lambda_1(r) ) I_4 + 
				\frac{m}{ 2 \phi(r)^2 } ( \lambda_0(r) - \lambda_1(r) )
				\begin{pmatrix}
					m I_2 & r I_2 \\
					r I_2 & - m I_2
				\end{pmatrix} \\
				&= \frac{\psi(r)^2}{\abs{\phi'(r)}} 
				\mleft(  F_\w(0) I_4 + 
				\frac{m}{ \phi(r)^2 } F_\w(2r^2)
				\begin{pmatrix}
					m I_2 & r I_2 \\
					r I_2 & - m I_2
				\end{pmatrix} 
				\mright) .
			\end{align}
			Then, since eigenvalues and associated eigenspaces of the matrix
			\begin{equation}
				\begin{pmatrix}
					m I_2 & r I_2 \\
					r I_2 & - m I_2
				\end{pmatrix}
			\end{equation}
			are $\pm \phi(r)$ and 
			\begin{equation}
				\ker{
					\begin{pmatrix}
						(m \mp \phi(r) ) I_2 & r I_2 \\
						r I_2 & - ( m \mp \phi(r) ) I_2
					\end{pmatrix}
				}
				= 
				\Set{ 
					s
					\begin{pmatrix}
						m \pm \phi(r) \\
						0 \\
						r \\
						0
					\end{pmatrix}
					+ 
					t 
					\begin{pmatrix}
						0 \\
						m \pm \phi(r) \\
						0 \\
						r 
					\end{pmatrix}
				}
				{ s, t \in \C } ,
			\end{equation}
			we conclude that the maximal eigenvalue of $\Quadmatrix(r)$ and its associated eigenspace are
			\begin{equation} \tag*{\eqref{eq:lambda 1D Dirac}}
				\Diraclambda(r) 
				= \frac{\psi(r)^2}{ \abs{\phi'(r)} } \mleft( F_\w(0) + \frac{m}{\phi(r)} \abs{ F_\w(2r^2) } \mright) 
			\end{equation}
			and
			\begin{equation} \tag*{\eqref{eq:eigenspace 1D}}
				\eigenspace(r) = 
				\begin{dcases}
					\C^4, & m F_\w(2r^2) = 0 , \\
					\Set{ 
						s
						\begin{pmatrix}
							m + \phi(r) \\
							0 \\
							r \\
							0
						\end{pmatrix}
						+ 
						t 
						\begin{pmatrix}
							0 \\
							m + \phi(r) \\
							0 \\
							r 
						\end{pmatrix}
					}
					{ s, t \in \C } , 
					& m F_\w(2r^2) > 0 , \\
					\Set{ 
						s
						\begin{pmatrix}
							m - \phi(r) \\
							0 \\
							r \\
							0
						\end{pmatrix}
						+ 
						t 
						\begin{pmatrix}
							0 \\
							m - \phi(r) \\
							0 \\
							r 
						\end{pmatrix}
					}
					{ s, t \in \C } , 
					& m F_\w(2r^2) < 0 ,
				\end{dcases}
			\end{equation}
			respectively.
			Therefore, we have
			\begin{align}
				\norm{\DiracS f}_{L^2(\R^2, \C^2)}^2 
				&= 2 \pi \int_{0}^{\infty} 
				\Innerproduct{ 
					\Quadmatrix(r) 
					\begin{pmatrix}
						\beta \fk{0}(r) \\
						\alpha \fk{1}(r)
					\end{pmatrix}
				}
				{
					\begin{pmatrix}
						\beta \fk{0}(r) \\
						\alpha \fk{1}(r)
					\end{pmatrix}
				}_{\C^4} \, dr \\
				&\leq 2 \pi \int_{0}^{\infty} \Diraclambda(r) ( \abs{ \fk{0}(r) }^2 + \abs{ \fk{1}(r) }^2 ) \, dr \\
				&\leq 2 \pi \Diraclambda^* \norm{f}_{L^2(\R, \C^2)}^2 
			\end{align}
			and hence
			\begin{equation}
				\norm{\DiracS}^2 \leq 2 \pi \Diraclambda^* .
			\end{equation}
			
			To see the equality $\norm{\DiracS}^2 = 2 \pi \Diraclambda^*$, 
			we will show that for any $\varepsilon > 0$, 
			there exists $f \in L^2(\R, \C^2) \setminus \{0\}$ 
			such that $\norm{Sf}^{2}_{L^2(\R^2, \C^2)} \geq 2 \pi ( \Diraclambda^* - \varepsilon ) \norm{f}_{L^2(\R, \C^2)}^2$.
			Fix $\varepsilon > 0$. 
			Then, by the definition of $\Diraclambda^*$ and the continuity of $\Diraclambda$, 
			the Lebesgue measure of
			\begin{equation}
				\DiracE(\varepsilon) \coloneqq \set{r > 0}{ \Diraclambda(r) \geq \Diraclambda^* - \varepsilon }	
			\end{equation}
			is nonzero (possibly infinite). 
			Now let $f \in L^2(\R, \C^2) \setminus \{ 0 \}$ be
			\begin{equation}
				f(\xi) = \sum_{k=0, 1} \Pk{k}( \xi / \abs{\xi} ) \fk{k}(\abs{\xi}) 
			\end{equation}
			with $\{ \fk{k} \}_{k = 0, 1} \subset L^2(\positiveR, \C^2)$ satisfying
			\begin{gather}
				\supp{( \abs{ \fk{0}(r) }^2 + \abs{ \fk{1}(r) }^2 )} \subset \DiracE(\varepsilon), \\
				\begin{pmatrix}
					\beta \fk{0}(r) \\
					\alpha \fk{1}(r)
				\end{pmatrix} 
				\in W(r) , \quad \text{ a.e. } r > 0 .
			\end{gather}
			Then we have
			\begin{align}
				\norm{\DiracS f}_{L^2(\R^2, \C^2)}^2 
				&= 2 \pi \int_{0}^{\infty} 
				\Innerproduct{ 
					\Quadmatrix(r) 
					\begin{pmatrix}
						\beta \fk{0}(r) \\
						\alpha \fk{1}(r)
					\end{pmatrix}
				}
				{
					\begin{pmatrix}
						\beta \fk{0}(r) \\
						\alpha \fk{1}(r)
					\end{pmatrix}
				}_{\C^4} \, dr \\
				&= 2 \pi \int_{0}^{\infty} \Diraclambda(r) ( \abs{ \fk{0}(r) }^2 + \abs{ \fk{1}(r) }^2 ) \, dr \\
				&\geq 2 \pi ( \Diraclambda^* - \varepsilon ) \norm{f}_{L^2(\R, \C^2)}^2 ,
			\end{align}
			hence the equality $\norm{\DiracS}^2 = 2 \pi \lambda^*$ holds.
			
			By a similar argument, we can show that $\eqref{item:extremiser condition 1D Dirac} \implies \eqref{item:extremiser 1D Dirac}$.
			Let $f \in L^2(\R, \C^2)$ be
			\begin{equation}
				f(\xi) = \sum_{k=0, 1} \Pk{k}( \xi / \abs{\xi} ) \fk{k}(\abs{\xi}) 
			\end{equation}
			and $\{ \fk{k} \}_{k = 0, 1} \subset L^2(\positiveR, \C^2)$ satisfy
			\begin{gather}
				\supp{( \abs{ \fk{0}(r) }^2 + \abs{ \fk{1}(r) }^2 )} \subset \DiracE , \\
				\begin{pmatrix}
					\beta \fk{0}(r) \\
					\alpha \fk{1}(r)
				\end{pmatrix} 
				\in \eigenspace(r) , \quad \text{ a.e. } r > 0 .
			\end{gather}
			Then we have
			\begin{align}
				\norm{Sf}_{L^2(\R^2, \C^2)}^2 
				&= 2 \pi \int_{0}^{\infty} 
				\Innerproduct{ 
					\Quadmatrix(r) 
					\begin{pmatrix}
						\beta \fk{0}(r) \\
						\alpha \fk{1}(r)
					\end{pmatrix}
				}
				{
					\begin{pmatrix}
						\beta \fk{0}(r) \\
						\alpha \fk{1}(r)
					\end{pmatrix}
				}_{\C^4} \, dr
				\\
				&= 2 \pi \int_{0}^{\infty} \Diraclambda(r) ( \abs{ \fk{0}(r) }^2 + \abs{ \fk{1}(r) }^2 ) \, dr \\
				&= 2 \pi \Diraclambda^* \norm{f}_{L^2(\R, \C^2)}^2.
			\end{align}
			
			Finally, we show that $\eqref{item:extremiser 1D Dirac} \implies \eqref{item:extremiser condition 1D Dirac}$.
			Suppose that the equality $\norm{\DiracS f}_{L^2(\R^2, \C^2)}^2 = 2 \pi \Diraclambda^* \norm{f}_{L^2(\R, \C^2)}^2$ holds. 
			Then, since
			\begin{align}
				&\quad 2\pi \int_{0}^{\infty} \mleft( \Diraclambda^* ( \abs{\fk{0}(r)}^2 + \abs{\fk{1}(r)}^2 ) - \Innerproduct{ 
					\Quadmatrix(r) 
					\begin{pmatrix}
						\beta \fk{0}(r) \\
						\alpha \fk{1}(r)
					\end{pmatrix}
				}
				{
					\begin{pmatrix}
						\beta \fk{0}(r) \\
						\alpha \fk{1}(r)
					\end{pmatrix}
				}_{\C^4} \mright)  \, dr \\
				&= 2 \pi \Diraclambda^* \norm{f}_{L^2(\R, \C^2)}^2 - \norm{\DiracS f}_{L^2(\R^2, \C^2)}^2 \\
				&= 0
			\end{align}
			and 
			\begin{equation}
				\Diraclambda^* ( \abs{\fk{0}(r)}^2 + \abs{\fk{1}(r)}^2 ) - \Innerproduct{ 
					\Quadmatrix(r) 
					\begin{pmatrix}
						\beta \fk{0}(r) \\
						\alpha \fk{1}(r)
					\end{pmatrix}
				}
				{
					\begin{pmatrix}
						\beta \fk{0}(r) \\
						\alpha \fk{1}(r)
					\end{pmatrix}
				}_{\C^4} \geq 0 ,
			\end{equation}
			we obtain 
			\begin{gather}
				\begin{pmatrix}
					\beta \fk{0}(r) \\
					\alpha \fk{1}(r)
				\end{pmatrix} \in \eigenspace(r)  , \\
				\begin{aligned}
					& \Diraclambda^* ( \abs{\fk{0}(r)}^2 + \abs{\fk{1}(r)}^2 ) - \Innerproduct{ 
						\Quadmatrix(r) 
						\begin{pmatrix}
							\beta \fk{0}(r) \\
							\alpha \fk{1}(r)
						\end{pmatrix}
					}
					{
						\begin{pmatrix}
							\beta \fk{0}(r) \\
							\alpha \fk{1}(r)
						\end{pmatrix}
					}_{\C^4} \\
					&= ( \Diraclambda^* - \Diraclambda(r) ) ( \abs{\fk{0}(r)}^2 + \abs{\fk{1}(r)}^2 ) = 0 
				\end{aligned}
			\end{gather}
			for almost every $r > 0$.
			On the other hand, the definition of $\DiracE$ implies that 
			\begin{equation}
				\Diraclambda^* - \Diraclambda(r) > 0 
			\end{equation}
			for any $r \in \positiveR \setminus \DiracE$.
			Therefore, we conclude that 
			\begin{equation}
				\abs{\fk{0}(r)}^2 + \abs{\fk{1}(r)}^2 = 0 
			\end{equation}
			holds for almost every $r \in \positiveR \setminus \DiracE$.
			\end{proof}
		
		\subsection{Proof of Theorems \ref{thm:radial Schrodinger} and \ref{thm:radial Dirac}: smoothing with radial data} \label{subsection:proof radial}
		In this section, we prove Theorems \ref{thm:radial Schrodinger} and \ref{thm:radial Dirac}.
		Through out this section, we choose orthonormal bases $\{ \Pk{0} \} \subset \HP_0(\R^d)$ and $\{ \Pkn{1}{n} \}_{1 \leq n \leq d} \subset \HP_1(\R^d)$ given by
		\begin{equation}
			\Pk{0}(\xi) \coloneqq (d V_d)^{-1/2} , \quad 
			\Pkn{1}{n}(\xi) \coloneqq (V_d)^{-1/2} \xi_n ,
		\end{equation}
				where $V_d$ denotes the volume of the $d$-dimensional unit ball.
		At first we give the proof of Theorem \ref{thm:radial Schrodinger} as a comparison of that of Theorem \ref{thm:radial Dirac}.
		\begin{proof}[Proof of Theorem \ref{thm:radial Schrodinger}]
			Fix $f \in L^2_\rad(\R^d)$ and write
			\begin{equation}
				f(\xi) = \abs{\xi}^{-(d-1)/2} \Pk{0}(\xi / \abs{\xi}) \fk{0} (\abs{\xi}) .
			\end{equation}
			Then Lemma \ref{lem:Sf norm decomposition Schrodinger} implies that 
			\begin{align}
				&\norm{Sf}_{L^2(\R^{d+1})}^2 \\
				&= 2 \pi \int_{0}^{\infty} \lambda_\rad(r) \abs{ \fk{0}(r) }^2 \, dr
				\\
				&\leq 2 \pi \lambda_\rad^* \norm{f}_{L^2(\R^d)}^2 ,
			\end{align}
			where
			\begin{equation}
				\lambda_\rad^* \coloneqq \sup_{r > 0} \lambda_\rad(r) .
			\end{equation}
			Hence, we have 
			\begin{equation}
				\norm{S}_\rad^2 \leq 2 \pi \lambda_\rad^* .
			\end{equation}
			
			To see the equality $\norm{S}_\rad^2 = 2 \pi \lambda^*$, 
			we will show that for any $\varepsilon > 0$, 
			there exists $f \in L_\rad^2(\R^d) \setminus \{0\}$ 
			such that $\norm{Sf}^{2}_{L^2(\R^{d+1})} \geq 2 \pi ( \lambda_\rad^* - \varepsilon ) \norm{f}_{L^2(\R^d)}^2$.
			Fix $\varepsilon > 0$. 
			Then, by the definition of $\lambda_\rad^*$ and the continuity of $\lambda_0$, 
			the Lebesgue measure of
			\begin{equation}
				E_\rad(\varepsilon) \coloneqq \set{r > 0}{ \lambda_\rad(r) \geq \lambda_\rad^* - \varepsilon }	
			\end{equation}
			is nonzero (possibly infinite). 
			Now let $f \in L_\rad^2(\R^d)$ be
			\begin{equation}
				f(\xi) = \Pk{0}( \xi / \abs{\xi} ) \fk{0}(\abs{\xi}) 
			\end{equation}
			with $\fk{0} \in L^2(\positiveR)$ satisfying
			\begin{equation}
				\supp \fk{0} \subset  E_\rad(\varepsilon) .
			\end{equation} 
			Then we have
			\begin{align}
				\norm{Sf}^{2}_{L^2(\R^{d+1})}
				&= 2 \pi \int_{0}^{\infty} \lambda_\rad(r) \abs{ \fk{0}(\abs{\xi}) }^2 \, dr \\
				&\geq 2 \pi \int_{0}^{\infty} ( \lambda_\rad^* - \varepsilon ) \abs{ \fk{0}(\abs{\xi}) }^2 \, dr \\
				&= 2 \pi ( \lambda_\rad^* - \varepsilon ) \norm{f}_{L^2(\R^d)}^2 ,
			\end{align}
			hence the equality $\norm{S}_\rad^2 = 2 \pi \lambda_\rad^*$ holds.
			
			By a similar argument, we can show that $\eqref{item:extremiser condition radial Schrodinger} \implies \eqref{item:extremiser radial Schrodinger}$.
			Let $f \in L_\rad^2(\R^d)$ be
			\begin{equation}
				f(\xi) = \Pk{0}(\xi/ \abs{\xi}) \fk{0}(\abs{\xi})
			\end{equation}
			and $\fk{0} \in L^2(\positiveR)$ satisfy
			\begin{equation}
				\supp{\fk{0}} \subset E_\rad .
			\end{equation}
			Then we have
			\begin{align}
				\norm{Sf}_{L^2(\R^{d+1})}^2 
				&= 2 \pi \int_{0}^{\infty} \lambda_\rad(r) \abs{ \fk{0}(r) }^2 \, dr
				\\
				&= 2 \pi \int_{0}^{\infty} \lambda_\rad^* \abs{ \fk{0}(r) }^2 \, dr \\
				&= 2 \pi \lambda_\rad^* \norm{f}_{L^2(\R^d)}^2.
			\end{align}
			
			Finally, we show that $\eqref{item:extremiser radial Schrodinger} \implies \eqref{item:extremiser condition radial Schrodinger}$.
			Suppose that the equality $\norm{Sf}_{L^2(\R^{d+1})}^2 = 2 \pi \lambda^* \norm{f}_{L^2(\R^d)}^2$ holds. 
			Then, since
			\begin{equation}
			2\pi \int_{0}^{\infty} (\lambda_\rad^*-\lambda_\rad(r)) \abs{\fk{0}(r)}^2 \, dr = 2 \pi \lambda_\rad^* \norm{f}_{L^2(\R^d)}^2 - \norm{Sf}_{L^2(\R^{d+1})}^2 = 0
			\end{equation}
			and 
			\begin{equation}
				( \lambda_\rad^* - \lambda_\rad(r) ) \abs{\fk{0}(r)}^2 \geq 0 , 
			\end{equation}
			we obtain
			\begin{equation}
				( \lambda_\rad^* - \lambda_\rad(r) ) \abs{\fk{0}(r)}^2 = 0 
			\end{equation}
			for almost every $r > 0$.
			On the other hand, the definition of $E_0$ implies that
			\begin{equation}
				\lambda_\rad^* - \lambda_\rad(r) > 0 
			\end{equation}
			for any $r \in \positiveR \setminus E_\rad$.
			Therefore, we conclude that
			\begin{equation}
				\abs{\fk{0}(r)}^2 = 0
			\end{equation}
			holds for almost every $r \in \positiveR \setminus E_\rad$. 
			\end{proof}
		Next we prove Theorem \ref{thm:radial Dirac}. 
		\begin{proof}[Proof of Theorem \ref{thm:radial Dirac}]
			Fix $f \in L^2_\rad(\R^d, \C^N)$ and write
			\begin{equation}
				f(\xi) = \abs{\xi}^{-(d-1)/2} \Pk{0}(\xi / \abs{\xi}) \fk{0} (\abs{\xi}) .
			\end{equation}
			Then, since we have
			\begin{align}
				f_\pm f(\xi) 
				=  \frac{\abs{\xi}^{-(d-1)/2}}{2}
				\mleft( 
				\mleft( I_N \pm \frac{m}{\phi(\abs{\xi})} \beta \mright) \Pk{0}(\xi / \abs{\xi}) \fk{0}(\abs{\xi}) \pm \frac{\abs{\xi}}{\sqrt{d} \phi(\abs{\xi})} \sum_{j=1}^{d} \Pkn{1}{j}(\xi / \abs{\xi}) \alpha_j \fk{0}(\abs{\xi}) \mright) ,
			\end{align}
			Lemma \ref{lem:Sf norm decomposition Dirac} implies that
			\begin{align}
				&\quad \norm{\DiracS f}^{2}_{L^2(\R^{d+1},\C^N)} \\
				&=
				2\pi \int_{0}^{\infty} \lambda_0(r) \mleft( \Abs{ \frac{1}{2} \mleft( I_N + \frac{m}{\phi(r)} \beta \mright) \fk{0}(r) }^2 + \Abs{ \frac{1}{2} \mleft( I_N - \frac{m}{\phi(r)} \beta \mright) \fk{0}(r) }^2 \mright) \, dr \\
				&\quad + 2\pi \sum_{j=1}^{d} \int_{0}^{\infty} 2 \lambda_1(r) \Abs{ \frac{r}{2 \sqrt{d} \phi(r)} \alpha_j \fk{0}(r) }^2 \, dr \\
				&= 
				2\pi \int_{0}^{\infty} \mleft( \frac{1}{2} \mleft( 1 + \frac{m^2}{\phi(r)^2} \mright) \lambda_0(r)  +  \frac{r^2}{2 \phi(r)^2} \lambda_1(r) \mright) \abs{ \fk{0}(r) }^2 \, dr \\
				&= 2 \pi \int_{0}^{\infty} \Diraclambda_\rad(r) \abs{ \fk{0}(r) }^2 \, dr \\
				&\leq 2 \pi \Diraclambda_\rad^* \norm{f}_{L^2(\R^d, \C^N)}, 
			\end{align}
			and thus $\norm{\DiracS}_\rad \leq 2 \pi \Diraclambda_\rad^*$.
			
			To see the equality $\norm{\DiracS}_\rad^2 = 2 \pi \Diraclambda_\rad^*$, 
			we will show that for any $\varepsilon > 0$, 
			there exists $f \in L^2_\rad(\R^d, \C^N) \setminus \{0\}$ 
			such that $\norm{\DiracS f}^{2}_{L^2(\R^{d+1},\C^N)} \geq 2 \pi ( \Diraclambda_\rad^* - \varepsilon ) \norm{f}_{L^2(\R^d, \C^N)}^2$.
			Fix $\varepsilon > 0$. 
			Then, by the definition of $\Diraclambda_\rad^*$, 
			the Lebesgue measure of
			\begin{equation}
				\DiracE_{\rad}(\varepsilon) \coloneqq \set{r > 0}{ \lambda_\rad(r) \geq \Diraclambda_\rad^* - \varepsilon }	
			\end{equation}
			is nonzero (possibly infinite). 
			Now let $f \in L^2(\positiveR, \C^N) \setminus \{0\}$ be
			\begin{equation}
				f(\xi) = \abs{\xi}^{-(d-1)/2} \Pk{0}(\xi / \abs{\xi}) \fk{0}(\abs{\xi}) 
			\end{equation}
			with $\fk{0} \in L^2(\positiveR, \C^N)$ satisfying
			\begin{equation}
				\supp{ \fk{0} } \subset \DiracE_{\rad} .
			\end{equation}
			Then we have
			\begin{align}
				\norm{\DiracS f}^{2}_{L^2(\R^{d+1},\C^N)}
				&= 2 \pi \int_{0}^{\infty} \Diraclambda_\rad(r) \abs{\fk{0}(r)}^2 \, dr \\
				&\geq 2 \pi \int_{0}^{\infty} ( \Diraclambda_\rad^* - \varepsilon ) \abs{\fk{0}(r)}^2 \, dr \\
				&= 2 \pi ( \Diraclambda_\rad^* - \varepsilon ) \norm{f}_{L^2(\R^d, \C^N)}^2 ,
			\end{align}
			hence the equality $\norm{\DiracS}_\rad^2 = 2 \pi \Diraclambda_\rad^*$ holds.
			
			By a similar argument, we can show that $\eqref{item:extremiser condition radial Dirac} \implies \eqref{item:extremiser radial Dirac}$. 
			Let $f \in L^2_\rad(\R^d, \C^N)$ be
			\begin{equation}
				f(\xi) = \abs{\xi}^{-(d-1)/2} \Pk{0}(\xi / \abs{\xi}) \fk{0}(\abs{\xi}) 
			\end{equation}
			and $\fk{0} \in L^2(\positiveR, \C^N)$ satisfy
			\begin{equation}
				\supp{ \fk{0} } \subset \DiracE_{\rad} .
			\end{equation}
			Then we have
			\begin{align}
				\norm{\DiracS f}^{2}_{L^2(\R^{d+1},\C^N)}
				&= 2 \pi \int_{0}^{\infty} \Diraclambda_\rad(r) \abs{\fk{0}(r)}^2 \indicator{\DiracE_{\rad}}(r) \, dr \\
				&\geq 2 \pi \int_{0}^{\infty} \Diraclambda_\rad^* \abs{\fk{0}(r)}^2 \indicator{\DiracE_{\rad}}(r) \, dr \\
				&= 2 \pi \Diraclambda_\rad^* \norm{f}_{L^2(\R^d, \C^N)}^2 .
			\end{align}
			Finally, we show that $\eqref{item:extremiser radial Dirac} \implies \eqref{item:extremiser condition radial Dirac}$.
			Suppose that the equality $\norm{\DiracS f}_{L^2(\R^{d+1}, \C^N)}^2 = 2 \pi \Diraclambda_\rad^* \norm{ f }_{L^2(\R^d, \C^N)}^2$ holds.
			Then, since
			\begin{equation}
			2 \pi \int_{0}^{\infty} ( \Diraclambda_\rad^* - \Diraclambda_\rad(r) ) \abs{\fk{0}(r)}^2  \, dr = 2 \pi \Diraclambda_\rad^* \norm{ f }_{L^2(\R^d, \C^N)}^2 - \norm{\DiracS f}_{L^2(\R^{d+1}, \C^N)}^2  = 0
			\end{equation}
			and 
			\begin{equation}
				( \Diraclambda_\rad^* - \Diraclambda_\rad(r) ) \abs{\fk{0}(r)}^2 \geq 0 , 
			\end{equation}
			we obtain
			\begin{equation}
				( \Diraclambda_\rad^* - \Diraclambda_\rad(r) ) \abs{\fk{0}(r)}^2 = 0 
			\end{equation}
			for almost every $r > 0$.
			On the other hand, by the definition of $\DiracE_{\rad}$, we have
			\begin{equation}
				\Diraclambda_\rad^* - \Diraclambda_\rad(r) > 0
			\end{equation}
			for any $r \in \positiveR \setminus \DiracE_{\rad}$. Therefore, we conclude that
			\begin{equation}
				\abs{\fk{0}(r)}^2 = 0 
			\end{equation}
			holds for almost every $r \in \positiveR \setminus \DiracE_{\rad}$.
			\end{proof}
\subsection{Proof of Theorem \ref{thm:explicit value}: explicit values}
			Theorem \ref{thm:explicit value} easily follows from the following lemma:
		\begin{lemma}[\cite{BS}] \label{lem:BS 1.6}
			Let $d \geq 2$, $1 < s < d$, and suppose that $( w, \psi, \phi )$ satisfy
			\begin{equation}
				w(x) = r^{-s}, \quad \psi(r)^2 = r^{1-s} \abs{ \phi'(r) } .
			\end{equation}
			Then we have
			\begin{equation}
				\lambda_k(r) = c_k = 2^{1-s} (2\pi)^{d} \frac{ \Gamma(s-1) \Gamma((d-s)/2 + k) }{ ( \Gamma(s/2) )^2 \Gamma( (d + s)/2 + k - 1 ) } .
			\end{equation}
			Furthermore, $\{ c_k \}_{k \in \N}$ is strictly decreasing.
			For example, if $d \geq 3$ and $s = 2$, then
			\begin{equation}
				\lambda_k(r) = c_k = \frac{(2\pi)^{d}}{d + 2k - 2} .
			\end{equation}
		\end{lemma}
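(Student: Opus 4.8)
The plan is to evaluate $\lambda_k(r)$ directly from its definition \eqref{eq:lambda_k} and to show that it does not depend on $r$. Under the hypotheses $\w(r) = r^{-s}$ and $\psi(r)^2 = r^{1-s}\abs{\phi'(r)}$, the prefactor in \eqref{eq:lambda_k} simplifies to $r^{d-1}\psi(r)^2/\abs{\phi'(r)} = r^{d-s}$. I would then compute $F_\w$: since $1 < s < d$, the function $\abs{x}^{-s}$ is locally integrable, hence a tempered distribution, and its Fourier transform (in the convention of this paper) is the Riesz kernel
\begin{equation}
	\widehat{\abs{\,\cdot\,}^{-s}}(\xi) = \frac{2^{d-s}\pi^{d/2}\Gamma\!\left(\tfrac{d-s}{2}\right)}{\Gamma\!\left(\tfrac{s}{2}\right)}\,\abs{\xi}^{s-d},
\end{equation}
which one derives cleanly from $\abs{x}^{-s} = \Gamma(s/2)^{-1}\int_0^\infty t^{s/2-1}e^{-t\abs{x}^2}\,dt$ together with the Gaussian Fourier transform. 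Comparing with \eqref{w_Fourier} gives $F_\w(\rho) = 2^{(d-s)/2}\pi^{d/2}\Gamma(\tfrac{d-s}{2})\Gamma(\tfrac{s}{2})^{-1}\rho^{-(d-s)/2}$, so that $F_\w(r^2(1-t))$ carries a factor $r^{-(d-s)}$ which exactly cancels the $r^{d-s}$ in front. This already proves $\lambda_k(r) \equiv c_k$, with $c_k$ equal to an explicit constant times the integral $I_k \coloneqq \int_{-1}^1 (1-t)^{-(d-s)/2}p_{d,k}(t)(1-t^2)^{(d-3)/2}\,dt$.

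The second step is to evaluate $I_k$. I would insert the Rodrigues formula \eqref{Legendre}, which rewrites $(1-t^2)^{(d-3)/2}p_{d,k}(t)$ as a constant times $\frac{d^k}{dt^k}(1-t^2)^{k+(d-3)/2}$, and then integrate by parts $k$ times. All boundary terms vanish: near $t = 1$ each is $O((1-t)^{(s-1)/2})$ and near $t = -1$ each is $O((1+t)^{(d-1)/2})$, so the strict inequalities $s > 1$ and $d \geq 2$ are exactly what is needed. Differentiating $(1-t)^{-(d-s)/2}$ $k$ times produces the Pochhammer factor $\Gamma(\tfrac{d-s}{2}+k)/\Gamma(\tfrac{d-s}{2})$, and (using $(1-t^2)^{k+(d-3)/2} = (1-t)^{k+(d-3)/2}(1+t)^{k+(d-3)/2}$) the remaining integral collapses to a Beta integral $\int_{-1}^1 (1-t)^{(s-3)/2}(1+t)^{k+(d-3)/2}\,dt = 2^{k+(d+s)/2-2}B\!\left(\tfrac{s-1}{2},k+\tfrac{d-1}{2}\right)$, whose convergence again uses $s > 1$ and $d \geq 2$. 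Collecting the Gamma factors gives a closed form for $I_k$.

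In the third step I would substitute $I_k$ back, use $\measure{\S^{d-2}} = 2\pi^{(d-1)/2}/\Gamma(\tfrac{d-1}{2})$, and apply the Legendre duplication formula $\Gamma(z)\Gamma(z+\tfrac12) = 2^{1-2z}\sqrt{\pi}\,\Gamma(2z)$ with $z = \tfrac{s-1}{2}$ to rewrite $\Gamma(\tfrac{s-1}{2})/\Gamma(\tfrac{s}{2})$ as $2^{2-s}\sqrt{\pi}\,\Gamma(s-1)/\Gamma(\tfrac{s}{2})^2$; at this point all stray powers of $2$ and $\pi$ and the factors $\Gamma(\tfrac{d-1}{2})$, $\Gamma(\tfrac{d-s}{2})$ cancel, leaving precisely $c_k = 2^{1-s}(2\pi)^d\Gamma(s-1)\Gamma(\tfrac{d-s}{2}+k)/(\Gamma(\tfrac{s}{2})^2\Gamma(\tfrac{d+s}{2}+k-1))$. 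Strict monotonicity is then immediate: every Gamma argument is positive because $1 < s < d$, so $c_k > 0$, and $c_{k+1}/c_k = (\tfrac{d-s}{2}+k)/(\tfrac{d+s}{2}-1+k) < 1$ since $\tfrac{d-s}{2} < \tfrac{d+s}{2}-1 \iff s > 1$. For the displayed special case, setting $s = 2$ and using $\Gamma(\tfrac{d}{2}-1+k)/\Gamma(\tfrac{d}{2}+k) = (\tfrac{d}{2}-1+k)^{-1}$ gives $c_k = \tfrac{1}{2}(2\pi)^d\cdot\tfrac{2}{d+2k-2} = (2\pi)^d/(d+2k-2)$.

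The only genuinely delicate point is the second step: one must check that every boundary term in the $k$-fold integration by parts vanishes and that the resulting Beta integral converges, and both facts rest on the strict inequality $1 < s < d$, which is exactly the standing hypothesis of the lemma. Everything else is constant-chasing; the most error-prone part is the bookkeeping of the powers of $2$ and $\pi$ in the third step, but it can be sanity-checked against the explicit closed form at $s = 2$.
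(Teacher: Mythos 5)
Your proposal is correct, and I checked the constant-chasing: with the convention $\hat{f}(\xi)=\int f(x)e^{-ix\cdot\xi}\,dx$ one indeed gets $F_\w(\rho)=2^{(d-s)/2}\pi^{d/2}\Gamma(\tfrac{d-s}{2})\Gamma(\tfrac{s}{2})^{-1}\rho^{-(d-s)/2}$, the factor $r^{-(d-s)}$ cancels the prefactor $r^{d-s}$, the $k$-fold integration by parts against the Rodrigues formula leaves the Beta integral you state (boundary terms $O((1-t)^{(s-1)/2})$ at $t=1$ and $O((1+t)^{(d-1)/2})$ at $t=-1$, so $s>1$, $d\geq 2$ suffice), and after $\abs{\S^{d-2}}=2\pi^{(d-1)/2}/\Gamma(\tfrac{d-1}{2})$ and the duplication formula the result collapses to $c_k=2^{d+1-s}\pi^{d}\,\Gamma(s-1)\Gamma(\tfrac{d-s}{2}+k)/(\Gamma(\tfrac{s}{2})^2\Gamma(\tfrac{d+s}{2}+k-1))$, which is exactly the claimed $2^{1-s}(2\pi)^d$ expression; the ratio $c_{k+1}/c_k=(\tfrac{d-s}{2}+k)/(\tfrac{d+s}{2}-1+k)<1$ and the $s=2$ specialization also check out. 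Note that the paper itself does not prove this lemma at all: it is quoted from Bez--Sugimoto and the reader is referred to the proof of Theorem 1.6 there, which evaluates the same Funk--Hecke integral for the homogeneous weight. So your argument is not a different method in substance, but it has the merit of being self-contained, and it makes explicit where the hypothesis $1<s<d$ enters (convergence of the Riesz--kernel Fourier transform, vanishing of the boundary terms, and convergence of the Beta integral).
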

		The proof of Lemma \ref{lem:BS 1.6} can be found in \cite[Proof of Theorem 1.6]{BS}.
		\begin{proof}[Proof of Theorem \ref{thm:explicit value}]
			By Lemma \ref{lem:BS 1.6}, we have
			\begin{gather}
				\lambda_k(r) = c_k , \\
				\Diraclambda_\rad(r) = \frac{1}{2}\mleft( c_0 + c_1 + \frac{m^2}{r^2 + m^2}( c_0 - c_1 ) \mright)
			\end{gather}
			and so that 
			\begin{gather}
				\sup_{k \in \N} \sup_{r > 0} \lambda_{k}(r) = \sup_{r > 0} \lambda_{k}(r) = c_0, \\
				\sup_{r>0} \Diraclambda_\rad(r)
				= \begin{cases}
					(c_0 + c_1)/2 , & m = 0 , \\
					c_0 , & m > 0 .
				\end{cases}
			\end{gather}
			Therefore, by Corollary \ref{cor:lower and upper bounds}, we conclude that 
			\begin{equation}
				\norm{\DiracS}^2 = 2 \pi c_0
			\end{equation}
			holds when $m > 0$. 
			In fact, in this case we have
			\begin{equation}
		 	\norm{S}^2 = \norm{S}_\rad^2 = \norm{\DiracS}^2 = \norm{\DiracS}_\rad^2 = 2 \pi c_0 . \quad \qedhere 
			\end{equation}
		\end{proof}
\section*{Acknowledgments}
			The authors would like to thank Mitsuru Sugimoto (Nagoya University) for valuable discussions.
		
		\bibliographystyle{spmpsci}
		\bibliography{bib_ikoma_suzuki}
	\end{document}